\newtheorem{thm}{Theorem}[section]
\newtheorem{cor}[thm]{Corollary}
\newtheorem{lem}[thm]{Lemma}
\newtheorem{prop}[thm]{Proposition}
\newtheorem{defn}[thm]{Definition}
\newtheorem{example}[thm]{Example}
\numberwithin{equation}{section}
\begin{document}

\title[Residually solvable extensions of an infinite dimensional filiform Leibniz algebra]{Residually solvable extensions of an infinite dimensional filiform Leibniz algebra}

\author{K.K. Abdurasulov$^{1,3}$, B.A. Omirov$^{1,3}$, I.S. Rakhimov$^{2,3}$, G.O. Solijanova$^{3}$}

\address{$^1$ Institute of Mathematics Uzbekistan Academy of Sciences, Tashkent, Uzbekistan} \email{abdurasulov0505@mail.ru, omirovb@mail.ru}

\address{$^2$ Faculty of Computer and Mathematical Sciences, Universiti Teknologi MARA (UiTM), Shah Alam, Malaysia}
\email{isamiddin@uitm.edu.my}

\address{$^3$ National University of Uzbekistan, Tashkent, Uzbekistan} \email{gulhayo.solijonova@mail.ru}

\begin{abstract} In the paper we describe the class of all solvable extensions of an infinite-dimensional filiform Leibniz algebra. The filiform Leibniz algebra is taken as a maximal pro-nilpotent ideal of a residually solvable Leibniz algebra. It is proved that the second cohomology group of the extension is trivial.
\end{abstract}

\subjclass[2010]{17B40, 17B56, 17B65}

\keywords{Lie algebra, potentially nilpotent Lie algebra, pro-nilpotent Lie algebra, cohomology group.}

\maketitle
\normalsize

\section{Introduction}

Lie algebras play an essential role in Mathematics and Physics. Their structure theory has been intensely studied for many years by mathematicians and theoretical physicists. In this paper we will be treating infinite-dimensional Lie and Leibniz algebras. The motivation comes from the results with applications of the solvable extension method to finite-dimensional Lie algebras obtained earlier (see Section \ref{S11}) and finite-dimensional Leibniz algebras obtained recently (see Section \ref{S13}).  All algebras considered in the paper are supposed to be over the field of complex numbers $\mathbb{C}$ unless otherwise specified.
\subsection{\textbf{Finite-dimensional Lie algebras.}} \label{S11}

It is well-known that the fundamental Levi theorem stating that every finite-dimensional Lie algebra is isomorphic to the semidirect sum of a semisimple Lie algebra and the maximal solvable ideal (the radical) greatly simplifies understanding the structure of a given Lie algebra.  The classification of simple and semisimple Lie algebras were obtained by W. Killing and \'{E}. Cartan using root systems. Simple Lie algebras are given as five exceptional algebras $E_6$, $E_7$, $E_8$, $F_4$, $G_2$ and series  $A_n$, $B_n$, $C_n$ and $D_n$. While a complete
classification of solvable and nilpotent Lie algebras seems not to be feasible. As for solvable Lie algebras, then various methods of their classification in low-dimensional cases have been implemented and lists of isomorphism classes were presented. Here are just a few words on one of these methods closely related to that problems studied in this paper. This is a method originated by V.V. Morozov \cite{Morozov} on construction of solvable Lie algebras by their nilradicals. It turned out what was noticed by V.V. Morozov and C.M. Mubarakzjanov that there is interrelations between a few invariants of Lie algebras: the dimension and the number of generators of the nilradical, the co-dimension of the nilradical, the number of nil-independent derivations, the existence of inner and outer derivations, the dimensions of the first and second (co)homology spaces. One of such kind relations states that the co-dimension of the nilradical of the Lie algebra is at most the number its nill-independent derivations. The fact has been used to construct the solvable Lie algebras in \cite{Mubarakzjanov1, Mubarakzjanov2, Mubarakzjanov3, Mubarakzjanov4}. By using these relationships the classification of solvable extensions has been given for the following classes of nilpotent Lie algebras in low-dimensions with Abelian \cite{Ndogmo1, Ndogmo2}, Heisenberg \cite{Rubin}, Borel \cite{Snobl3}, $\mathbb{N}$-graded \cite{Campoamor}, filiform and quasifiliform nilradicals \cite{Snobl1, Snobl2} (also see \cite{Snobl} and references therein). A few other nilradicals cases are given in \cite{Shabanskaya1, Shabanskaya2, Shabanskaya3}. In the finite-dimensional case, the concept of filiform Lie algebra has been introduced by Vergne in \cite{Vergne}. It turned out that the case of filiform and quasi-filiform nilradicals the relationships mentioned above much simplify the situation. Moreover, there are cases when the co-dimension of the nilradical is equal to the number of its generators. In these cases the structure of solvable Lie algebras is quite rigid. It was observed that a such Lie algebra is unique up to isomorphism (see \cite{Khal}), its center is trivial and all derivations are inner. In addition, often such Lie algebras have trivial second Chevalley cohomology groups (see \cite{Ancochea1, Leger}).


\subsection{\textbf{Infinite-dimensional Lie algebras.}}
As mentioned above the problem of classification of simple finite-dimensional Lie algebras
over the field of complex numbers was solved by the end of the 19th century
by W. Killing and \'{E}. Cartan. And just over a decade later, \'{E}. Cartan classified simple
infinite-dimensional Lie algebras of vector fields on a finite-dimensional
space. Then B. Weisfeiler \cite{Weisfeiler}  finds an algebraic proof of Cartan's classification theorem reducing the problem to the classification of simple $\mathbb{Z}$-graded Lie algebras of finite ``depth''.

At the present time there is no general theory of infinite-dimensional Lie algebras. There are, however, four classes of infinite-dimensional Lie algebras that underwent a more or less intensive study due to their various applications mostly in Physics. These are, first of all, the above-mentioned Lie algebras of vector fields, the second class consists of Lie algebras of smooth mappings of a given manifold into a finite-dimensional Lie algebra, the third class is the classical Lie algebras of operators in a Hilbert or Banach space and finally, the fourth class of infinite-dimensional Lie algebras is the class of the so-called Kac-Moody algebras.

 In the classification theory of infinite-dimensional Lie algebras, several deep results were obtained with Galois cohomology methods exhibiting exciting connections between forms of multi-loop algebras and the Galois theory of forms of algebras over rings. This branch of structure theory is complemented by the connection between the classification of generalized Kac-Moody algebras and automorphic forms.
 At the same time, the study of some classes of infinite-dimensional Lie algebras is still in its infancy. Even such concepts as solvability and nilpotency, as well as their interrelations, are not completely understood.

It is known that there is no analogue of the Levi decomposition in infinite-dimensional Lie algebras. Moreover, the analogue of Engel's theorem for infinite-dimensional Lie algebras is also wrong. Only in the 1990s E.Zelmanov could solve the Burnside problem, which connects the nilpotency property of a Lie algebra with adjoint operators satisfying the $n$-Engel's condition \cite{Zel'manov}.

Note that there are some examples of the so-called pro-solvable Lie algebras whose maximal pro-nilpotent ideal is $\mathbb{N}$-graded Lie algebra of maximal class (infinite-dimensional filiform Lie algebra) the method described above for finite-dimensional solvable Lie algebras by means of its nilradical is applicable (see \cite{Qobil2}). It should be noted that in all of these mentioned examples, the codimension of the maximal pro-nilpotent ideal of a pro-solvable algebra coincides with the number of generators of the pro-nilpotent ideal.
%

The infinite-dimensional analogue of filiform Lie algebras has been introduced by A.Fialowski a long time ago in \cite{Fialowski}.
Nevertheless, the systematic study of infinite-dimensional cases has not been of interest. The attempts were occasional depending mainly on some applications in Physics and Geometry. For instance, in \cite{Khak}, two classes of infinite-dimensional Lie algebras called potentially nilpotent and potentially solvable were introduced in connection with the study of their deformations.

\subsection{\textbf{Finite-dimensional Leibniz algebras.}} \label{S13}

Many results of the theory of Lie algebras have been extended to the case of Leibniz algebras. For instance, the classical results on Cartan subalgebras, Levi's decomposition, properties of solvable algebras with a given nilradical and others from the theory of Lie algebras are also true for Leibniz algebras.
D. Barnes \cite{Barnes2} has proved an analogue of Levi Theorem for Leibniz algebras. It was shown that a Leibniz algebra is decomposed into a semidirect sum of its solvable radical and a semisimple Lie algebra. Therefore, the description of finite-dimensional Leibniz algebras is reduced to the study of solvable Leibniz algebras. D.Barnes also noted that the non-uniqueness of the semisimple subalgebra $S$ appeared in Levi-Malcev theorem (the minimum dimension of Leibniz algebra where this phenomena appears is six). It is known that in the case of Lie algebras the semisimple Levi quotient is unique up to a conjugation via an inner automorphism. However, the conjugacy property in the case of Leibniz algebras is not true, in general. This phenomenon has been studied in \cite{KuLO}. The authors give the conditions for the semisimple part of Leibniz algebras in the decomposition to be conjugated.

The generalizations of the solvable extension method mentioned in Section \ref{S11} to some cases of Leibniz algebras have been given in \cite{Casas, Abror1, Karimjanov, Abror2, Abror3}. The construction of solvable extensions has been treated for the following
classes of Leibniz algebras: with Abelian \cite{Rustam2,Abror3}, Heisenberg \cite{Bosko}, filiform and null-filiform nilradicals \cite{Casas,Omirov1}, naturally graded filiform \cite{Casas1, Ladra}, naturally graded quasi-filiform \cite{O'ktam}, the direct sum of null-filiform \cite{Abror2, Abror3}  (also see \cite{Shabanskaya4} and references therein). Note that in the paper \cite{O'ktam}, the construction of a finite dimensional complete Leibniz algebra with the subspace complementary space to the nilradical is not maximal was given. It also was proven there that the second (co)homology group of these type of Leibniz algebras is trivial.

%

\subsection{\textbf{Infinite-dimensional Leibniz algebras.}}


  An extensive study of Lie and Leibniz algebras leaded many beautiful results and generalizations.
However, in spite of great interest and applications the study of some classes of infinite-dimensional Leibniz algebras is still in its infancy. Such in the case of infinite-dimensional Leibniz algebras the concepts of solvability and nilpotency, as well as their interrelations, are not yet completely understood.

 For most classes of the infinite-dimensional Leibniz algebras an analogue of the Levi decomposition and Engel's theorem also do not hold.
In this paper we are going to treat infinite-dimensional Lie and Leibniz algebras around the solvable extension method narrated in Sections \ref{S11} and \ref{S13}. This motivates to introduce a reasonable analogue of the concepts of solvability and nilpotency in the infinite-dimensional cases.
  They have been called pro-nilpotent and pro-solvable algebras.
   The concept of pro-nilpotency is defined by two conditions: the intersection of all members of the lower central series is trivial (so-called residually nilpotency property), and the quotient algebra by any member of the central series is finite-dimensional (this condition allows one to endow an infinite-dimensional pro-nilpotent algebra with the topology of the inverse limit of finite-dimensional spaces). One can call ``tends to zero'' such behaviour of an ideal only with a fair amount of fantasy, for instance, a free finitely generated Leibniz algebra is also pro-nilpotent. Nevertheless, the condition of residual nilpotency is such that finite-dimensional nilpotent Leibniz algebras are also pro-nilpotent. By analogue with solvable and pro-nilpotent Lie (Leibniz) algebras, we consider pro-solvable algebras. 

Note that there exist some examples of pro-solvable Lie algebras whose maximal pro-nilpotent ideal is $\mathbb{N}$-graded Lie algebra of maximal class (infinite-dimensional filiform Lie algebra) for which the method of describing finite-dimensional solvable Lie algebras by means of its nilradical is completely agrees. It should be noted that in this case the phenomena occurred in finite-dimensional case, where the dimension of complementary subspace equals the number of generators of the nilradical are valid. That is the structure of infinite-dimensional solvable algebras such that the dimension of the complementary subspace to the nilradical is equal to the number of generators of the nilradical is quite rigid. Such algebras are unique up to isomorphism, their center is zero, and all the derivations are inner.
In addition, often such infinite-dimensional algebras have trivial second cohomology groups. Therefore, one of the main goals of the paper is: to determine, in the infinite-dimensional case, analogues of the concepts of solvability and nilpotency; then to extend the method of constructing solvable finite-dimensional Lie and Leibniz algebras in terms of the structure of the nilradical and their nill-independent derivations. Remind that the infinite-dimensional analogue of finite-dimensional filiform Leibniz algebras has been  introduced in \cite{Omirov}. The author called them \emph{thin Leibniz algebras}.




The organization of the paper is as follows. In Section \ref{Pre} we recall some theorems on finite-dimensional Lie and Leibniz algebras, definitions needed and some important properties of infinite-dimensional Lie and Leibniz algebras. We start Section \ref{main} with studying infinite-dimensional Lie algebras.
 All residually solvable extensions of the family whose the dimension of the complementary space to maximal pro-nilpotent ideal is less than maximal is classified and  it is showed that all the obtained algebras have outer derivations.
 The completeness of maximal residually solvable Lie algebra with maximal pro-nilpotent ideal $\mathfrak{m}_0$ are and trivialness of their second cohomology groups are proven.
Section \ref{sub2} is devoted to the study of infinite-dimensional Leibniz algebras. All residually solvable extensions whose maximal pro-nilpotent ideal is an infinite-dimensional analogue of a filiform Leibniz algebra  are classified. The triviality of the first and second cohomology groups of the family of the maximal residually solvable extensions are proven.
Here one of the differences among the others between Lie and Leibniz algebras appears. Furthermore, in this subsection we provide an example of residually solvable  complete Leibniz algebra with not maximal co-dimension (see Theorem \ref{t2}).


The infinite-dimensional algebras considered are supposed to have a countable basis such that any element of the algebra is represented as a finite linear combination of the basis elements.

 \normalsize
\section{Preliminaries}\label{Pre}
The purpose of this section is to establish the notation, clarify the definitions used in the following text and to give preliminary results.

\begin{defn} An algebra $L$ over a field $\mathbb{F}$ is said to be a Leibniz algebra if the following identity
$$[x, [y, z]] = [[x, y], z]-[[x, z], y]$$
holds true for any $x, y, z \in L$ , where $[\cdot,\cdot]$ stands for the product in $L$.
\end{defn}
We introduce the notation $Leib(a,b,c)=[a,[b,c]]-[[a,b],c]+[[a,c],b]$ to use it later.

If $[x, x] = 0$ then $Leib(x,y,z)=0$ is the Jacobi
identity. Therefore, a Leibniz algebra is a ``noncommutative'' analogue of a Lie algebra.

For a given Leibniz algebra $L$ we define the following two-sided ideals
\begin{itemize}
  \item {${\rm Ann}_r(L) =\{x \in L \mid [y,x] = 0,\ \text{for \ all}\ y \in L \},$}
  \item {${\rm Center}(L) =\{x \in L \mid [x,y]=[y,x] = 0,\ \text{for \ all}\ y \in L \}$.}
\end{itemize}
called the {\it right annihilator} and the {\it center} of $L$, respectively.

\begin{defn} A linear map $d \colon L \rightarrow L$ is said to be a
 {derivation} if for all $x, y \in L$, the following derivation rule \[d([x,y])=[d(x),y] + [x, d(y)] \, \] holds true.
\end{defn}
The set of all derivations of $L$ is a Lie algebra with respect to the composition. It is denoted by $\mathrm{Der}(L).$ For $x \in L$, as usual, $ad_x$ denotes the map $ad_x: L \rightarrow L$ defined by $ad_x(y)=[x,y], \ \forall y \in L.$
Obviously, $ad_x$ is a derivation called {\it inner derivations}. The set of all inner derivations of $L$ is denoted by $Inner(L).$ No inner derivations in $\mathrm{Der}(L)$ are called {\it outer derivations}. The set $Inner(L)$ is an ideal of $\mathrm{Der}(L)$.

\begin{defn} A Leibniz algebra $L$ is said to be  {complete} if ${\rm Center}(L)=0$ and all its derivations are
inner.
\end{defn}
Let $R$ be a finite dimensional solvable Leibniz algebra with nilradical $N$ and $Q$ be the subspace complementary to the nilradical $N$.
We denote by $N_{max}$ the class of all algebras with a property that $\dim Q = \dim (N/N^2)$ for some solvable Leibniz algebra $R$. In \label{R}\cite{Rustam} the following theorem was proven.

\begin{thm} Let $R' = N \oplus Q'$ be a solvable Leibniz algebra such that $N\in N_{max}$ and $\dim Q' < \dim (N/N^2)$. Then $R'$ admits an outer derivation.\end{thm}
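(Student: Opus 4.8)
The plan is to exhibit a concrete outer derivation of $R'$ built from the directions in which the action of the complement $Q'$ on the nilradical ``falls short'' of the maximal one. Write $s=\dim(N/N^2)$ for the number of generators of $N$ and $t=\dim Q'$, so that by hypothesis $t<s$. Since $N\in N_{max}$, there is a solvable Leibniz algebra $R=N\oplus Q$ with $\dim Q=s$, and the elements of $Q$ act on $N$ by $s$ nil-independent derivations; the missing $s-t$ directions in $Q'$ are what will furnish the outer derivation.

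First I would record the elementary fact that a derivation $d$ of $N$ is nilpotent if and only if the induced linear map $\bar d$ on $N/N^2$ is nilpotent; this holds because $N$ is generated by $N/N^2$, so the weights of $d$ on each quotient $N^{k}/N^{k+1}$ are sums of $k$ weights of $\bar d$ on $N/N^2$. In particular, for a family of commuting semisimple derivations, nil-independence is equivalent to linear independence of the induced diagonal maps on $N/N^2$. Applying this to $R$, the operators $\overline{\mathrm{ad}_{y}}$ for $y$ in a basis of $Q$ are $s$ linearly independent commuting diagonal maps on the $s$-dimensional space $N/N^2$, hence they span the whole algebra $\mathcal D$ of diagonal operators in a fixed generator basis.

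Next I would analyze the complement of $R'$. For $q\in Q'$ the map $\mathrm{ad}_q|_N$ is a derivation of $N$, and since $R'$ is solvable with nilradical $N$ and $Q'$ a complementary subspace, the restrictions $\{\mathrm{ad}_q|_N: q\in Q'\}$ are nil-independent; by the previous paragraph their induced diagonal parts span a subspace $W\subseteq\mathcal D$ with $\dim W\le t<s=\dim\mathcal D$. I would then pick a diagonal operator $\delta\in\mathcal D\setminus W$ and lift it to the element $D_0$ of a maximal torus $T\subseteq\Der(N)$ that simultaneously diagonalizes the semisimple parts of the $\mathrm{ad}_q|_N$, so that $[D_0,\mathrm{ad}_q]=0$ for every $q\in Q'$. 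Setting $D|_N=D_0$ and $D|_{Q'}=0$, I would verify the derivation identity on $R'$: on a mixed bracket $[q,n]$ it reduces precisely to $[D_0,\mathrm{ad}_q]|_N=0$, while on $[Q',Q']\subseteq N$ it follows once one observes that the complement may be taken abelian, so that $[Q',Q']=0$ and the identity is vacuous (or, more generally, that $D_0$ annihilates the relevant weight components). This produces a genuine derivation $D\in\Der(R')$.

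Finally I would show $D$ is outer. If $D=\mathrm{ad}_x$ were inner, with $x=n_0+q_0$, $n_0\in N$, $q_0\in Q'$, then passing to $N/N^2$ would give $\bar D=\overline{\mathrm{ad}_{q_0}}\in W$, since the inner derivation $\mathrm{ad}_{n_0}$ maps $N$ into $N^2$ and so induces the zero map on $N/N^2$; but $\bar D=\delta\notin W$, a contradiction. Hence $D$ is an outer derivation of $R'$. The main obstacle is the third step: manufacturing an honest derivation of the \emph{whole} algebra $R'$ out of a chosen diagonal direction $\delta$. This forces the lift $D_0$ to be selected inside a common maximal torus so that it commutes with the entire $Q'$-action, and requires control of its effect on the $[Q',Q']$-brackets; the auxiliary equivalence ``nilpotent $\Leftrightarrow$ nilpotent on $N/N^2$'' and the simultaneous diagonalizability of the complement's action are the technical facts that make this extension possible.
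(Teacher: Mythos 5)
The paper does not actually prove this theorem --- it is imported verbatim from \cite{Rustam} --- so your attempt can only be judged on its own terms, and on those terms it has a fatal gap at exactly the step you flag as ``the main obstacle'': the lift of $\delta$. With $D|_{Q'}=0$, the derivation identity on mixed brackets forces \emph{exact} operator commutation $[D_0,\,ad_q|_N]=0$ (and likewise for the left multiplications in the Leibniz case), not merely commutation with the semisimple parts of $ad_q|_N$ inside a common torus; since $ad_q|_N$ generally carries a nilpotent part lying outside any torus, the diagonal operators commuting with the full $ad_q|_N$ can be a proper subspace of $\mathcal{D}$ that is entirely contained in $W$, leaving no admissible $\delta\notin W$ at all. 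Concretely, take $N=\mathrm{Span}\{e_1,e_2\}$ abelian (so $s=2$ and $N\in N_{max}$) and $R'=N\oplus\mathbb{C}x$ with $[e_1,x]=e_1$, $[e_2,x]=e_1+e_2$. Then $W=\mathbb{C}\cdot\mathrm{id}$, while $\mathrm{diag}(a,b)$ commutes with $ad_x|_N=\mathrm{id}+E_{12}$ only when $a=b$, i.e.\ only when it lies in $W$; moreover, allowing a correction $D(x)=n_0\in N$ changes nothing, because $[e_i,n_0]=0$ for abelian $N$. So your recipe produces no derivation here. The theorem is nonetheless true for this algebra: $d$ with $d(e_2)=e_1$, $d(e_1)=d(x)=0$ is an outer derivation --- but it induces the \emph{zero} map on $N/N^2$, so your outerness certificate ``$\bar{D}=\delta\notin W$'' is also not where outerness can be located in general; in this example the obstruction to innerness sits in the nilpotent parts, which your construction discards.

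Two secondary points. First, ``the complement may be taken abelian'' is unjustified and false in general: $Q'$ is only a complementary \emph{subspace}, $[Q',Q']\subseteq N$ typically cannot be normalized to zero, and the identity on $[q,q']$ then imposes the unverified condition $D_0([Q',Q'])=0$ (note $[Q',Q']$ need not lie in $N^2$, so weight-zero heuristics do not dispose of it). Second, the simultaneous placement of the semisimple parts of the non-commuting operators $ad_q|_N$, $q\in Q'$, into one maximal torus of $\Der(N)$ requires Mal'cev--Mostow conjugacy theory together with a modification of the $q$'s by elements of $N$, and you assert it without argument; in the Leibniz setting the left and right multiplication operators must additionally be treated separately. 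A repair would have to be structurally different: either relax $D|_{Q'}=0$ and demand only that $[D_0,ad_q|_N]$ be the inner derivation $ad_{D(q)}|_N$ for some $D(q)\in N$, which is a genuine cohomological condition to be verified, or argue (as the example suggests) that whenever the diagonal route is blocked the nilpotent discrepancies themselves furnish outer derivations.
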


Let $L$ be an infinite-dimensional Lie (respectively, Leibniz) algebra with countable basis.
As usual
$$L^1=L, \quad L^{k+1}=[L^k,L],  \ k \geq 1; \qquad L^{[1]}=L, \quad L^{[s+1]}=[L^{[s]},L^{[s]}], \ s \geq 1.$$
are the {\it lower central} and the {\it derived series}, respectively.
We imitate a definition given in \cite{Mil2008} from Lie to Leibniz algebras as follows.

\begin{defn}\label{def1}  A Leibniz algebra $L$ is called residually nilpotent $($respectively, solvable$)$ if $\bigcap\limits_{i=1} ^{\infty}L^{i}=0$ $(respectively,\ \bigcap\limits_{i=1} ^{\infty}L^{[i]}=0)$.
\end{defn}
Here are some examples.
\begin{example}\label{exam1} Lie algebra $L$ given on a basis $\{e_0,e_1,e_2,\dots\}$ by
$$L:\left\{\begin{array}{lll}
  [e_0,e_i]=e_{i-1},& i\geq3,\end{array}\right.$$

 is residually solvable but not residually nilpotent.
\end{example}

\begin{example}\label{exam2} Let $L$ be a Lie algebra given on a basis $\{e_0,e_1,e_2,\dots\}$ by the following commutation relations
$$L: [e_i,e_j]=e_0.$$ Then $L$ is both residually nilpotent and residually solvable.
\end{example}

The concepts of potentially nilpotency and solvability also can be extended from Lie algebras case given in \cite{Khak} to Leibniz algebras as follows.

\begin{defn}\label{def2} A Leibniz algebra $L$ is said to be potentially nilpotent $($respectively, solvable$)$, if $\bigcap\limits_{i=1} ^{\infty}L^{i}=0$ $($respectively, $\bigcap\limits_{i=1}^{\infty}L^{[i]}=0)$ and $dim (L^i/L^{i+1}) < \infty$ $($respectively, $dim (L^{[i]}/L^{[i+1]}) < \infty)$ for any $i\geq 1$.
\end{defn}

An infinite-dimensional Leibniz algebra $F$ given on a basis $\{e_1,e_2,\dots\}$ by
$[e_i,e_1]=e_{i+1} \ \ i\geq 2$ is potentially nilpotent but not potentially solvable.

The definitions of pro-nilpotency and pro-solvability of algebras have been introduced in \cite{Mil2019} as follows.
\begin{defn} \label{def3} An algebra $L$ is called pro-nilpotent $($respectively, pro-solvable$)$, if $\bigcap_{i=1} ^{\infty}L^{i}=0$ and $dim (L/L^{i}) < \infty$ $($respectively, if $\bigcap\limits_{i=1} ^{\infty}L^{[i]}=0$ and $dim (L/L^{[i]}) < \infty)$ for any $i\geq 1$.
\end{defn}

Observe that there is the following isomorphism of vector spaces:
$$L/L^{i} \cong L/L^{2}\oplus L^2/L^{3}\oplus \cdots \oplus L^{i-1}/L^i$$
which implies
\begin{equation} \label{eq1}
\dim (L/L^{i})=\sum_{k=1}^{i-1}\dim (L^k/L^{k+1})< \infty.
\end{equation}
This means that the definitions of potentially nilpotency of Lie (respectively, Leibniz) algebras and pro-nilpotency of Lie (respectively, Leibniz) algebras are equivalent.

The same arguments due to the isomorphisms of vector spaces
$$L/L^{[i]} \cong L/L^{[2]}\oplus L^{[2]}/L^{[3]}\oplus \cdots \oplus L^{[i-1]}/L^{[i]}$$
assure that potentially solvable Lie (respectively, Leibniz) algebra is a pro-solvable Lie (respectively, Leibniz) algebra and vice versa, i.e., Definitions \ref{def2} and \ref{def3} the above are equivalent.

Note that the quotients $L/L^i$ of a pro-nilpotent Lie (respectively, Leibniz) algebras are finite-dimensional and nilpotent. In particular, any pro-nilpotent Lie (respectively, Leibniz) algebra is finitely generated.

Due to the inclusions $L^{[i]} \subseteq L^{2^{i-1}}$ one has $\bigcap_{i=1}^{\infty}L^{[i]}\subseteq \bigcap_{i=1} ^{\infty}L^{2^{i-1}} \subseteq \bigcap_{i=1} ^{\infty}L^{i}=0$. Therefore, a residually nilpotent algebra is residually solvable.

\begin{defn} {A linear map $\rho: L \to L$ is called residually nilpotent, if $\bigcap\limits_{i=1}^{\infty}Im\ \rho^i=0$ holds true.}
\end{defn}
Below we introduce the analogue of the notion of nil-independency that has played a crucial role in the description of finite-dimensional solvable Lie algebras (see \cite{Mubarakzjanov1}).

\begin{defn} Derivations $d_{1},d_{2},\dots,d_{n}$ of Leibniz algebra $L$ over a field $\mathbb{F}$ are said to be {residually nil-independent}, if a map $f=\alpha_{1}d_{1}+\alpha_{2}d_{2}+\ldots+\alpha_{n}d_{n}$ is not residually nilpotent for any scalars $\alpha_{1},\alpha_{2},\dots,\alpha_{n}\in \mathbb{F}$. In other words, $\bigcap\limits_{i=1}^{\infty}Im\ f^i=0$ if and only if $\alpha_{1}=\alpha_{2}=\dots=\alpha_{n}=0.$
\end{defn}

Just to recall that low-order Chevalley cohomology groups of Lie algebras is interpreted as follows
$${\rm H}^1(L,L)={\rm Der}(L)/{\rm Inder}(L) \quad \mbox{and} \quad {\rm H}^2(L,L)={\rm Z}^2(L,L)/{\rm B}^2(L,L)$$

where the set ${\rm Z}^2(L,L)$ consists of those elements $\varphi\in {\rm Hom}(\wedge^2L, L)$  such that
\begin{equation}\label{Coc}
Z(x,y,z)=[x,\varphi(y,z)] - [\varphi(x,y), z]
+[\varphi(x,z), y] +\varphi(x,[y,z]) - \varphi([x,y],z)+\varphi([x,z],y)=0,
\end{equation}
while ${\rm B}^2(L,L)$ is the set of those elements $\psi\in {\rm Hom}(\wedge^2L, L)$ there exixts $f\in {\rm Hom}(L,L)$ such that
\begin{equation}\label{eq4}
\psi(x,y)=f([x,y])-[f(x),y] - [x,f(y)].
\end{equation}

An analogue of the above for Leibniz algebras is interpreted similarly. The difference is just to replace
${\rm Z}^2(L,L)$ by ${\rm ZL}^2(L,L)$ consisting of elements $\ \psi\in {\rm Hom}(L\otimes L, L)$
such that
$$Z(x,y,z)=[x,\varphi(y,z)] - [\varphi(x,y), z]
+[\varphi(x,z), y] +\varphi(x,[y,z]) - \varphi([x,y],z)+\varphi([x,z],y)=0,$$
 and ${\rm B}^2(L,L)$ must be replaced by ${\rm BL}^2(L,L)$ consisting of $\ \varphi\in {\rm Hom}(L\otimes L, L)$ satisfying the same condition (\ref{eq4}) above.
$${\rm HL}^1(L,L)={\rm Der}(L)/{\rm Inder}(L) \quad \mbox{and} \quad {\rm HL}^2(L,L)={\rm ZL}^2(L,L)/{\rm BL}^2(L,L).$$

In terms of the cohomology groups the notion of completeness of a Lie (respectively, Leibniz) algebra $L$ means that it is centerless and ${\rm H}^1(L,L)=0$ (respectively, ${\rm HL}^1(L,L)=0$). The rigidity of a Lie (respectively, Leibniz) algebra stands for ${\rm H}^2(L,L)=0$ (respectively, ${\rm HL}^2(L,L)=0$).

It was observed that there is only one, so far, complete and rigid solvable Leibniz algebra with the property that the co-dimension of the nilradical is less than the rank. Such an algebra has been given in \cite{O'ktam} by the following table of multiplications:
\begin{equation}\label{O'}L:\left\{\begin{array}{lll}
[e_1,e_1]=e_3,& [e_i,e_1]=e_{i+1},\quad 2\le i\le n-1,\\[1mm]
[e_1,x]=-e_1,& [e_2,x]=-e_2+e_n,\\[1mm]
[x,e_1]=e_1,& [e_i,x]=-(i-1)e_i,\quad 3\le i\le n.
\end{array}\right.\end{equation}

We make use results of \cite{Qobil2}, where authors considered the algebra $\mathfrak{m}_0$ with the following table of multiplication on a basis $\{e_1,e_2,\dots,e_n\}$
$$\mathfrak{m}_0: \left\{\left[e_i,e_1\right]=e_{i+1}, \ \ i\geq 2\right.$$
and the following results have been obtained.
\begin{prop} \label{prop1} The space of derivations of the algebra $\mathfrak{m}_0$ is the following:
$$\text{Der}(\mathfrak{m}_0):\left\{\begin{array}{ll}d(e_{1})=\sum\limits_{i=1}^{t}\alpha_{i}e_{i},\\[2mm]
  d(e_{k})=((k-2)\alpha_{1}+\beta_{2})e_{k}+\sum\limits_{i=3}^{t}\beta_{i}e_{i+k-2},\,\,\ \text{where} \,\,k\geq 2.
  \end{array}\right.$$
 \end{prop}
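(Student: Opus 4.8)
The plan is to exploit that $\mathfrak{m}_0$ is generated by $e_1$ and $e_2$: since $e_k=[e_{k-1},e_1]$ for every $k\ge 3$, any derivation $d$ is completely determined by the two values $d(e_1)$ and $d(e_2)$. Because every element of $\mathfrak{m}_0$ is a finite linear combination of basis vectors, I may write $d(e_1)=\sum_{i=1}^{t}\alpha_i e_i$ and $d(e_2)=\sum_{i=1}^{t}\beta_i e_i$ for a common truncation index $t$. First I would derive a recursion for the remaining $d(e_k)$, and then impose the derivation identity on the brackets that vanish in order to cut down the free parameters.

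For the recursion, applying $d$ to $e_k=[e_{k-1},e_1]$ gives $d(e_k)=[d(e_{k-1}),e_1]+[e_{k-1},d(e_1)]$ for $k\ge 3$. Since $[e_{k-1},e_j]=0$ for all $j\ge 2$, only the $e_1$-component of $d(e_1)$ survives in the second term, so $[e_{k-1},d(e_1)]=\alpha_1 e_k$ and hence
$$d(e_k)=[d(e_{k-1}),e_1]+\alpha_1 e_k,\qquad k\ge 3 .$$
Right multiplication by $e_1$ sends $e_j\mapsto e_{j+1}$ for $j\ge 2$ and annihilates $e_1$; since the recursion keeps every $d(e_k)$ with $k\ge 2$ supported on basis vectors of index $\ge 2$, a straightforward induction starting from $d(e_2)$ yields the closed form $d(e_k)=((k-2)\alpha_1+\beta_2)e_k+\sum_{i=3}^{t}\beta_i e_{i+k-2}$ asserted in the statement, where the $\beta_i$ are exactly the coefficients of $d(e_2)$.

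It remains to decide which choices of $d(e_1),d(e_2)$ actually give derivations, i.e. to impose the identity on the brackets $[e_i,e_j]=0$ with $i,j\ge 2$. These read $[d(e_i),e_j]+[e_i,d(e_j)]=0$, and since the bracket of two basis vectors of index $\ge 2$ vanishes, only the $e_1$-components of $d(e_i)$ and $d(e_j)$ contribute; using $[e_1,e_m]=-e_{m+1}$ (by antisymmetry) and $[e_m,e_1]=e_{m+1}$ this reduces to a relation between $e_{j+1}$ and $e_{i+1}$, forcing the $e_1$-component of each such $d(e_i)$ to vanish. For $k\ge 3$ this is automatic from the recursion, so the only genuine restriction is that the $e_1$-coefficient of $d(e_2)$ be zero, namely $\beta_1=0$, exactly matching the displayed formula. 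The brackets involving $e_1$, that is $[e_1,e_1]=0$ and $[e_1,e_i]=-e_{i+1}$, impose nothing new: the former holds by antisymmetry and the latter is equivalent to the recursion already used. I would finish with the converse, substituting the closed form back into the identity on each class of brackets to confirm that every map of the stated shape is indeed a derivation.

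The part needing the most care is the bookkeeping in the constraint step: one must verify that the infinitely many relations $[e_i,e_j]=0$ with $i,j\ge 2$ collapse to the single condition $\beta_1=0$, and — perhaps counterintuitively — that the coefficients $\alpha_2,\dots,\alpha_t$, which enter only through $d(e_1)$ and never appear in the recursion or in any constraint, remain completely free. Confirming the closed form by induction and tracking the common truncation index $t$ are then routine once the recursion is in hand.
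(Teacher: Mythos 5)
Your argument is correct and is essentially the proof the paper takes for granted: Proposition \ref{prop1} is imported from \cite{Qobil2} without proof, and for the Leibniz analogue (Proposition \ref{prop01}) the paper calls the same computation immediate, namely applying the derivation rule to the multiplication table exactly as you do --- fixing $d(e_1)$ and $d(e_2)$ on the generators, propagating via $d(e_k)=[d(e_{k-1}),e_1]+\alpha_1 e_k$, and using the vanishing brackets to force $\beta_1=0$, with the $\alpha_i$, $i\geq 2$, remaining free. One cosmetic point: the relation $[e_i,e_j]=0$ with $i=j$ yields no constraint (the two contributions cancel by antisymmetry), so $\beta_1=0$ really comes from a pair with $i\neq j$, e.g.\ $[e_2,e_3]=0$, which is what your bookkeeping implicitly uses.
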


 Note that  $t\in \mathbb{N}$ in Proposition \ref{prop1} stands for the maximal index with non-zero coefficient in the expansion by the basis, i.e., $\alpha_{i}=0$ for all $i > t.$ The same is applied throughout the paper when the upper limit of a summation is given as a variable.


\begin{lem} Let $M$ be a residually solvable Lie algebra whose maximal by inclusion  pro-nilpotent ideal is $\mathfrak{m}_0$ and $Q$ be the subspace complementary to $\mathfrak{m}_0$. Then one has $\bigcap\limits_{i=1}^{\infty}Im\empty\ ad_{x}^{i}\neq0$ for every $x\in Q$.
\end{lem}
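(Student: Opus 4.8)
The proof I have in mind runs by contradiction and rests on the maximality of $\mathfrak{m}_0$, with the explicit shape of $\mathrm{Der}(\mathfrak{m}_0)$ from Proposition \ref{prop1} used to dispose of the transparent case. Fix $x\in Q$ with $x\neq0$ (for $x=0$ the statement is vacuous). Since $\mathfrak{m}_0$ is an ideal, $ad_x$ maps $\mathfrak{m}_0$ into itself, and by the Jacobi identity $d:=ad_x|_{\mathfrak{m}_0}$ is a derivation of $\mathfrak{m}_0$; hence $d$ has the form given in Proposition \ref{prop1} for suitable scalars $\alpha_1,\dots,\alpha_t,\beta_2,\dots,\beta_t$. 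Because $ad_x^i(M)\supseteq ad_x^i(\mathfrak{m}_0)=d^i(\mathfrak{m}_0)$ for every $i$, one has $\bigcap_{i\ge1}\mathrm{Im}\,ad_x^i\supseteq\bigcap_{i\ge1}\mathrm{Im}\,d^i$, so it is enough to exhibit a nonzero element of $\bigcap_{i\ge1}\mathrm{Im}\,d^i$ whenever $d$ is not residually nilpotent, and to treat the residually nilpotent case separately.

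Reading off Proposition \ref{prop1}, $d$ is upper triangular in the basis $e_1,e_2,\dots$, with diagonal entry $\alpha_1$ on $e_1$ and $c_k:=(k-2)\alpha_1+\beta_2$ on $e_k$ for $k\ge2$, the only index-raising contributions coming from the parameters $\beta_3,\dots,\beta_t$. In the transparent case $\beta_3=\dots=\beta_t=0$ one has $d(e_k)=c_ke_k$ for $k\ge2$, so $d$ acts invertibly on $W:=\mathrm{span}\{e_k: k\ge2,\ c_k\neq0\}$; consequently $W\subseteq\mathrm{Im}\,d^i$ for all $i$, whence $W\subseteq\bigcap_{i\ge1}\mathrm{Im}\,d^i$. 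Here $W\neq0$ precisely when $\alpha_1\neq0$ or $\beta_2\neq0$ (if $\alpha_1\neq0$ then $c_k\neq0$ for all but at most one $k$, while if $\alpha_1=0$ then $c_k=\beta_2$ for all $k$), which already yields $\bigcap_{i\ge1}\mathrm{Im}\,ad_x^i\neq0$ in this case.

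The substantive case is when $d$ is residually nilpotent: the presence of a nonzero $\beta_i$ with $i\ge3$ forces the images $d^i(\mathfrak{m}_0)$ to spread to arbitrarily high indices, so that $\bigcap_i\mathrm{Im}\,d^i$ may vanish and the first reduction is useless. Here I would argue by contradiction. Suppose $\bigcap_{i\ge1}\mathrm{Im}\,ad_x^i=0$, i.e. $ad_x$ is residually nilpotent on $M$. The plan is to build from $x$ a pro-nilpotent ideal strictly containing $\mathfrak{m}_0$. Concretely, consider $\widetilde{\mathfrak{m}}=\mathfrak{m}_0+\mathbb{C}x$: it is a subalgebra (as $[\mathfrak{m}_0,\mathfrak{m}_0]\subseteq\mathfrak{m}_0$, $[x,\mathfrak{m}_0]\subseteq\mathfrak{m}_0$ and $[x,x]=0$), its quotient by $\mathfrak{m}_0$ is one-dimensional, and its lower central series lands in $\mathfrak{m}_0$ for $k\ge2$, being governed by iterated brackets built from $ad_x|_{\mathfrak{m}_0}=d$ and from $ad_a$ with $a\in\mathfrak{m}_0$. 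Since $d$ is residually nilpotent and $\mathfrak{m}_0$ is pro-nilpotent, I expect $\bigcap_k\widetilde{\mathfrak{m}}^{\,k}=0$ together with $\dim(\widetilde{\mathfrak{m}}/\widetilde{\mathfrak{m}}^{\,k})<\infty$, i.e. $\widetilde{\mathfrak{m}}$ pro-nilpotent. Upgrading $\widetilde{\mathfrak{m}}$ (or the ideal of $M$ it generates) to a pro-nilpotent ideal properly containing $\mathfrak{m}_0$ contradicts maximality, forcing $\bigcap_{i\ge1}\mathrm{Im}\,ad_x^i\neq0$.

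The main obstacle is precisely this last step. First, $\mathfrak{m}_0+\mathbb{C}x$ need not be an ideal of $M$, since the $Q$-component of $[Q,x]$ is not controlled a priori; I would circumvent this either by passing to the ideal of $M$ generated by $\mathfrak{m}_0$ and $x$ and checking its pro-nilpotency, or by first analysing the induced structure on $Q=M/\mathfrak{m}_0$. Second, establishing pro-nilpotency in the infinite-dimensional setting is delicate: one must verify both the triviality of $\bigcap_k\widetilde{\mathfrak{m}}^{\,k}$ and the finite-dimensionality of the quotients, combining the residual nilpotency of $ad_x$ with the pro-nilpotency of $\mathfrak{m}_0$. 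A naive eigenvalue-on-a-quotient count does not suffice, because in infinite dimensions a descending chain $\mathrm{Im}\,ad_x^i$ can have trivial intersection even when each term meets a fixed nonzero coset; the argument must keep track of the genuine finitely supported elements surviving under all powers of $ad_x$.
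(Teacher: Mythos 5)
Your overall skeleton --- argue by contradiction, use the explicit form of $\mathrm{Der}(\mathfrak{m}_0)$ from Proposition \ref{prop1}, and contradict the maximality of $\mathfrak{m}_0$ via the subalgebra $\mathfrak{m}_0+\mathbb{C}x$ --- is the right one: the paper does not actually prove this Lie lemma (it is quoted from \cite{Qobil2}), but it proves the exact Leibniz analogue (Lemma \ref{lem01}) by precisely this route. However, the paper's argument contains one decisive step that your proposal lacks, and without it the step you yourself flag as the ``main obstacle'' does not merely remain open --- it fails. The paper first deduces from $\bigcap_i \mathrm{Im}\, ad_x^i=0$ that the \emph{diagonal} parameters vanish, $\alpha_1=\beta_2=0$, and only then concludes that $\mathfrak{m}_0+\mathbb{C}x$ is pro-nilpotent: once $\alpha_1=\beta_2=0$, every bracket strictly raises the index of $e_k$, so the lower central series of $\mathfrak{m}_0+\mathbb{C}x$ descends through the tails $\mathrm{span}\{e_k,e_{k+1},\dots\}$, has trivial intersection and finite-dimensional quotients. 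Your case split is on the wrong parameters: you split on whether $\beta_3,\dots,\beta_t$ vanish, whereas the dichotomy the argument needs is on whether $\alpha_1,\beta_2$ vanish.

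Concretely, in your ``substantive case'' you assume only that $ad_x$ is residually nilpotent and ``expect'' $\widetilde{\mathfrak{m}}=\mathfrak{m}_0+\mathbb{C}x$ to be pro-nilpotent; that implication is false without first killing $\alpha_1$ and $\beta_2$. Take $\alpha_1=0$, $\beta_2=1$, $\beta_3=1$ and all other parameters zero, i.e.\ $[e_i,x]=e_i+e_{i+1}$ for $i\ge2$ and $[e_1,x]=0$ (this is an algebra of type $R_3(\mathfrak{m}_0,1,\beta)$ from Theorem \ref{thm1}, so it does occur under the hypotheses of the lemma). Identifying $\mathrm{span}\{e_2,e_3,\dots\}$ with $\mathbb{C}[z]$ via $e_k\mapsto z^{k-2}$, the operator $ad_x$ acts as multiplication by $-(1+z)$, hence $\mathrm{Im}\, ad_x^{\,n}=(1+z)^n\mathbb{C}[z]$, and since every element of the algebra is a \emph{finite} linear combination of basis vectors, $\bigcap_n \mathrm{Im}\, ad_x^{\,n}=0$: your hypothesis holds. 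Yet $(\mathfrak{m}_0+\mathbb{C}x)^k=\mathrm{span}\{e_2,e_3,\dots\}$ for every $k\ge2$, so $\widetilde{\mathfrak{m}}$ is not even residually nilpotent and no contradiction with maximality is available. (This example also shows that the literal image-intersection definition must be read, as the paper's proofs implicitly do, as nilpotency of the induced maps on the finite-dimensional quotients $\mathfrak{m}_0/\mathfrak{m}_0^k$; under that reading, $\alpha_1\neq0$ or $\beta_2\neq0$ immediately makes the induced map invertible-plus-nilpotent, hence non-nilpotent, on every quotient --- which is exactly the implication ``residual nilpotency $\Rightarrow\alpha_1=\beta_2=0$'' your proof is missing.) Your first case is correct as far as it goes, but it covers only $\beta_3=\dots=\beta_t=0$, so your two cases do not exhaust the possibilities, and the uncovered regime ($(\alpha_1,\beta_2)\neq(0,0)$ with some $\beta_i\neq0$, $i\ge3$) is precisely where your plan breaks. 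By contrast, your secondary worry --- that $\mathfrak{m}_0+\mathbb{C}x$ need not be an ideal of $M$ --- is shared by the paper's own proof of Lemma \ref{lem01}, which also passes silently over this point.
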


 \begin{thm} Let $M$ be an residually solvable Lie algebra whose maximal by inclusion pro-nilpotent ideal is $\mathfrak{m}_0$ and $Q$ be the subspace complementary to $\mathfrak{m}_0$. Then the dimension of $Q$ is not greater than the maximal number of residually  nil-independent derivations of $\mathfrak{m}_0$.
\end{thm}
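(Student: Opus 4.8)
The plan is to reproduce, in the residual setting, the classical Mubarakzjanov argument that bounds the codimension of the nilradical by the number of nil-independent derivations. The whole proof rests on the linear assignment $x \mapsto \mathrm{ad}_x|_{\mathfrak{m}_0}$ sending $Q$ into $\mathrm{Der}(\mathfrak{m}_0)$: first I would check that it lands in derivations, then that the image of a basis of $Q$ consists of residually nil-independent derivations, and finally read off the dimension inequality.

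First I would verify that for each $x \in Q$ the restriction $\mathrm{ad}_x|_{\mathfrak{m}_0}$ is a derivation of $\mathfrak{m}_0$. Since $\mathfrak{m}_0$ is a two-sided ideal we have $[x,\mathfrak{m}_0]\subseteq\mathfrak{m}_0$, so the restriction maps $\mathfrak{m}_0$ to itself, and the derivation identity $\mathrm{ad}_x([a,b])=[\mathrm{ad}_x(a),b]+[a,\mathrm{ad}_x(b)]$ for $a,b\in\mathfrak{m}_0$ is exactly the Jacobi identity in $M$. The assignment $x\mapsto \mathrm{ad}_x|_{\mathfrak{m}_0}$ is clearly linear, so for any scalars $\alpha_1,\dots,\alpha_s$ one has $\sum_i\alpha_i(\mathrm{ad}_{x_i}|_{\mathfrak{m}_0})=\mathrm{ad}_{x}|_{\mathfrak{m}_0}$ with $x=\sum_i\alpha_i x_i$.

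Next, taking a basis $\{x_1,\dots,x_s\}$ of $Q$ (and noting the same reasoning applies to any finite linearly independent subset when $\dim Q$ is infinite), I would show the derivations $d_i:=\mathrm{ad}_{x_i}|_{\mathfrak{m}_0}$ are residually nil-independent. Suppose not: some nontrivial combination $d=\sum_i\alpha_i d_i$ is residually nilpotent, i.e. $\bigcap_{k\geq1}\mathrm{Im}\,d^k=0$. Then $d=\mathrm{ad}_x|_{\mathfrak{m}_0}$ with $x=\sum_i\alpha_i x_i$, and since the $x_i$ form a basis of $Q$ and the $\alpha_i$ are not all zero, $x$ is a nonzero element of $Q$. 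The preceding Lemma gives $\bigcap_{k\geq1}\mathrm{Im}\,\mathrm{ad}_x^k\neq0$, which I want to turn into $\bigcap_{k\geq1}\mathrm{Im}\,d^k\neq0$, contradicting residual nilpotency of $d$. Once this contradiction is in place, the $d_i$ are residually nil-independent, so the maximal number of residually nil-independent derivations of $\mathfrak{m}_0$ is at least $s=\dim Q$, which is the claim.

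The main obstacle is precisely the bridge between the operator $\mathrm{ad}_x$ acting on all of $M$, which the Lemma controls, and its restriction $d=\mathrm{ad}_x|_{\mathfrak{m}_0}$ appearing in the definition of residual nil-independence. For this I would prove $\mathrm{Im}\,\mathrm{ad}_x\subseteq\mathfrak{m}_0$ for $x\in Q$, equivalently that $M/\mathfrak{m}_0$ is abelian, i.e. $[Q,Q]\subseteq\mathfrak{m}_0$; this is the residual analogue of the fact that the derived subalgebra of a solvable algebra lies in its nilradical, and it forces the image of $\mathrm{ad}_x$ into the maximal pro-nilpotent ideal $\mathfrak{m}_0$. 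Granting it, $\mathrm{ad}_x^k(M)=d^{k-1}(\mathrm{ad}_x(M))\subseteq\mathrm{Im}\,d^{k-1}$ for $k\geq1$, whence $\bigcap_{k\geq1}\mathrm{Im}\,\mathrm{ad}_x^k\subseteq\bigcap_{j\geq1}\mathrm{Im}\,d^{j}$; combined with the Lemma this yields $\bigcap_{j\geq1}\mathrm{Im}\,d^j\neq0$, the desired contradiction. I expect verifying the inclusion $[Q,Q]\subseteq\mathfrak{m}_0$ in the residually solvable setting — where the derived subalgebra is only residually solvable a priori — to be the delicate step, and I would lean on the maximality of $\mathfrak{m}_0$ as the inclusion-maximal pro-nilpotent ideal together with the structure of $\mathrm{Der}(\mathfrak{m}_0)$ from Proposition \ref{prop1} to secure it.
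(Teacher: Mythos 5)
Your skeleton --- restrict the adjoint action to $\mathfrak{m}_0$, show that the restrictions coming from a basis of $Q$ are residually nil-independent via the preceding Lemma, and read off the dimension bound --- is exactly the intended Mubarakzjanov-type argument (the paper itself quotes this theorem from \cite{Qobil2} without reproducing a proof; the proved Leibniz analogue of the key Lemma is Lemma \ref{lem01}). The genuine gap is the ``bridge'' in your last paragraph. You read the Lemma as a statement about $ad_x$ acting on all of $M$, and consequently you need $\mathrm{Im}\, ad_x\subseteq\mathfrak{m}_0$, i.e. $[Q,Q]\subseteq\mathfrak{m}_0$; this you leave unproved, proposing to obtain it as ``the residual analogue of the fact that the derived subalgebra of a solvable algebra lies in its nilradical.'' That analogue is not available here, for two separate reasons. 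First, residual solvability of $M$ only yields $\bigcap_{i}\bigl([M,M]\bigr)^{[i]}\subseteq\bigcap_{i}M^{[i+1]}=0$, i.e. $[M,M]$ is residually solvable (as you note); nothing forces it to be pro-nilpotent --- neither $\bigcap_i [M,M]^{i}=0$ nor $\dim\bigl(M/[M,M]^{i}\bigr)<\infty$ comes for free. Second, even if $[M,M]$ were a pro-nilpotent ideal, that would not place it inside $\mathfrak{m}_0$: the hypothesis is only that $\mathfrak{m}_0$ is maximal \emph{by inclusion}, not that it contains every pro-nilpotent ideal, and in the infinite-dimensional setting sums of pro-nilpotent ideals need not be pro-nilpotent, so no ``largest'' such ideal is guaranteed. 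Hence the step, justified the way you plan to justify it, would fail, and the proof is incomplete as written (your conditional logic after ``Granting it'' is fine; it is the grant itself that is unsupported).

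The good news is that the bridge is unnecessary. The Lemma, as it is actually used in this circle of results (compare the proof of Lemma \ref{lem01}), concerns the restriction $ad_x|_{\mathfrak{m}_0}\in \mathrm{Der}(\mathfrak{m}_0)$: if it were residually nilpotent, then by the description of $\mathrm{Der}(\mathfrak{m}_0)$ in Proposition \ref{prop1} the diagonal parameters must vanish ($\alpha_1=\beta_2=0$), whence $\mathfrak{m}_0\oplus\mathbb{C}x$ is pro-nilpotent, contradicting the maximality of $\mathfrak{m}_0$. With this reading, your first two paragraphs already complete the proof: a nontrivial combination $\sum_i\alpha_i\, ad_{x_i}|_{\mathfrak{m}_0}=ad_x|_{\mathfrak{m}_0}$ with $0\neq x\in Q$ cannot be residually nilpotent, so the restrictions of any basis (or of any finite linearly independent subset, when $\dim Q$ is infinite) of $Q$ are residually nil-independent, and $\dim Q$ is bounded by the maximal number of residually nil-independent derivations of $\mathfrak{m}_0$. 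In short: use the restricted form of the Lemma, drop the claim $[Q,Q]\subseteq\mathfrak{m}_0$ entirely, and your argument closes and coincides with the intended one.
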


\begin{cor} \label{cor} The maximal number of residually nil-independent derivations of $\mathfrak{m}_0$ equals two.
\end{cor}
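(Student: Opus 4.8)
The plan is to reduce the statement to a two–dimensional linear–algebra count by isolating the ``diagonal part'' of a derivation. By Proposition \ref{prop1} a derivation $d\in\mathrm{Der}(\mathfrak{m}_0)$ is determined by the scalars $\alpha_1,\dots,\alpha_t,\beta_2,\dots,\beta_t$, and reading off coefficients in the fixed basis one sees that the coefficient of $e_1$ in $d(e_1)$ is $\alpha_1$, while (taking $k=2$ in the formula) the coefficient of $e_2$ in $d(e_2)$ is $\beta_2$. First I would introduce the map $\pi\colon\mathrm{Der}(\mathfrak{m}_0)\to\mathbb{C}^2$, $\pi(d)=(\alpha_1,\beta_2)$; since each coordinate is a fixed basis coefficient of the image of a fixed basis vector, $\pi$ is linear in $d$.

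The key lemma to establish is: if $\pi(d)=0$, then $d$ is residually nilpotent. Indeed, setting $\alpha_1=\beta_2=0$ in Proposition \ref{prop1} gives $d(e_1)=\sum_{i=2}^{t}\alpha_i e_i$ and $d(e_k)=\sum_{i=3}^{t}\beta_i e_{i+k-2}$ for $k\ge 2$, so in every case each basis vector occurring in $d(e_k)$ has index at least $k+1$. Writing $F_m=\mathrm{span}\{e_m,e_{m+1},\dots\}$, this says exactly $d(F_m)\subseteq F_{m+1}$, whence $\mathrm{Im}\,d^{\,n}=d^{\,n}(F_1)\subseteq F_{n+1}$. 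As every element of $\mathfrak{m}_0$ is a \emph{finite} combination of basis vectors, $\bigcap_n F_{n+1}=0$, so $\bigcap_n \mathrm{Im}\,d^{\,n}=0$ and $d$ is residually nilpotent.

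With this lemma the upper bound is immediate. Given any three derivations $d_1,d_2,d_3$, the vectors $\pi(d_1),\pi(d_2),\pi(d_3)$ lie in the plane $\mathbb{C}^2$ and are therefore linearly dependent, so there exist scalars $(c_1,c_2,c_3)\neq 0$ with $\pi(c_1d_1+c_2d_2+c_3d_3)=0$; by the lemma the nontrivial combination $f=c_1d_1+c_2d_2+c_3d_3$ is residually nilpotent. Hence no three derivations are residually nil-independent, and the maximal number is at most two.

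For the matching lower bound I would exhibit the two diagonal derivations obtained from Proposition \ref{prop1}: $d_1$ with $\alpha_1=1$ and all other parameters zero, and $d_2$ with $\beta_2=1$ and all other parameters zero, so that $d_1(e_1)=e_1,\ d_1(e_k)=(k-2)e_k$ and $d_2(e_1)=0,\ d_2(e_k)=e_k$ for $k\ge 2$. Any combination $f=ad_1+bd_2$ is diagonal, with $f(e_1)=ae_1$ and $f(e_k)=\bigl(a(k-2)+b\bigr)e_k$; for a diagonal operator $\mathrm{Im}\,f^{\,n}=\mathrm{span}\{e_k:\mu_k\neq 0\}$ for every $n\ge 1$, where $\mu_k$ is the $k$-th eigenvalue, so $\bigcap_n\mathrm{Im}\,f^{\,n}$ is nonzero unless all $\mu_k$ vanish, i.e. unless $a=b=0$. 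Thus every nontrivial combination of $d_1,d_2$ fails to be residually nilpotent, giving two residually nil-independent derivations, and the maximum equals two. The only delicate point is the filtration estimate in the lemma, together with the observation that for the bound we need \emph{only} the implication $\pi(d)=0\Rightarrow$ residually nilpotent; this lets us sidestep the subtler converse, which would force us to analyze surjectivity of upper-triangular operators in infinite dimensions.
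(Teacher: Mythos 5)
Your proof is correct. The paper itself states Corollary \ref{cor} without proof (it is recalled from \cite{Qobil2} together with the preceding lemma and theorem), and your argument is exactly the one Proposition \ref{prop1} is set up to deliver: the linear projection $\pi(d)=(\alpha_1,\beta_2)$ onto the two diagonal parameters, the filtration estimate $d(F_m)\subseteq F_{m+1}$ showing that $\pi(d)=0$ forces $\mathrm{Im}\,d^{\,n}\subseteq F_{n+1}$ and hence residual nilpotency, and the two diagonal derivations $d_1,d_2$ whose nontrivial combinations always retain infinitely many nonzero eigenvalues (so the possible vanishing of a single eigenvalue $a(k-2)+b$ for one $k$, which you correctly account for, does no harm). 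Both bounds are complete and the linear-dependence step in $\mathbb{C}^2$ correctly converts the key lemma into the upper bound, so there is no gap.
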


Therefore, due to Corollary \ref{cor} the dimension of $Q$ is not greater than $2$. The case $\text{dim}Q=2$ has been treated in \cite{Qobil2}.

\section{Main Results}\label{main}
\subsection{Infinite-dimensional Lie algebras}\label{sub1}


 The section we begin with treating the case $\dim Q=1$ of the discussion at the end of the previous section, i.e., $M=\mathfrak{m}_0\oplus Q$, where $\dim Q=1.$ Further, throughout the paper a residually solvable algebra whose maximal by inclusion pro-nilpotent ideal is $N$ and the codimension of  $N$ is  $n$ is denoted by  $R(N,n)$.

\begin{thm}\label{thm1} Let $R(\mathfrak{m}_0,1)$ be a {residually} solvable Lie algebra whose maximal by inclusion pro-nilpotent ideal is $\mathfrak{m}_0$. Then it admits a basis $\{x, e_{1}, e_{2}, \dots\}$ such that the table of multiplications of $R(\mathfrak{m}_0,1)$ on this basis is given by one of the following form

\qquad \qquad \qquad \qquad \qquad $R_1(\mathfrak{m}_0,1,\beta)$ $:\left\{\begin{array}{lll}
 [e_{i},e_1]=e_{i+1},& i\geq2,\\[1mm]
 [e_1,x]=e_1,\\[1mm]
 [e_i,x]=(i-2)e_i+\sum\limits_{k=3}^{t}\beta_{k}e_{i+k-2},&  i\geq2,
 \end{array}\right.$\\
where $\beta=(\beta_3,\beta_4,\dots,\beta_t)\in\mathbb{C}^{t-2} \ \text{for \ some}\  t\in \mathbb{N}.$\\

\qquad \qquad \qquad \qquad \qquad  $R_2(\mathfrak{m}_0,1,\beta)$ $:\left\{\begin{array}{lll}
[e_{i},e_1]=e_{i+1},& i\geq2,\\[1mm]
[e_1,x]=e_1+\alpha_2e_2,\\[1mm]
[e_i,x]=((i-2)+\beta_2)e_i+\sum\limits_{k=3}^{t}\beta_{k}e_{i+k-2},& i\geq2,
\end{array}\right.$\\
 where $\beta=(\beta_2,\beta_3,\dots,\beta_t)\in\mathbb{C}^{t-1} \ \text{for \ some}\  t\in \mathbb{N}$.\\

\qquad \qquad \qquad \qquad \qquad $R_3(\mathfrak{m}_0,1,\beta)$ $:\left\{\begin{array}{lll}
[e_{i},e_1]=e_{i+1},& i\geq2,\\[1mm]
[e_{i},x]=e_{i}+\sum\limits_{k=3}^{t}\beta_{k}e_{i+k-2},& i\geq 2,
\end{array}\right.$\\
 where $\beta=(\beta_3,\beta_4,\dots,\beta_t)\in\mathbb{C}^{t-2} \ \text{for \ some}\  t\in \mathbb{N}.$
\end{thm}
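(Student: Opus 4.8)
The plan is to use that $\mathfrak{m}_0$ is an ideal of $R(\mathfrak{m}_0,1)=\mathfrak{m}_0\oplus\langle x\rangle$, so the map $d\colon y\mapsto[y,x]$ restricts to a derivation of $\mathfrak{m}_0$. By Proposition~\ref{prop1} it has the form $d(e_1)=\sum_i\alpha_ie_i$ and $d(e_k)=((k-2)\alpha_1+\beta_2)e_k+\sum_{i\ge3}\beta_ie_{i+k-2}$ for $k\ge2$. Declaring $[e_i,x]:=d(e_i)$, $[x,x]=0$, together with the fixed relations $[e_i,e_1]=e_{i+1}$ and antisymmetry, determines the whole bracket, and the Jacobi identity for triples containing $x$ holds automatically because it is equivalent to $d\in\Der(\mathfrak{m}_0)$. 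Thus no further structural relation is imposed, and the classification reduces to putting $d$ into a normal form under those changes of basis that keep the splitting $\mathfrak{m}_0\oplus\langle x\rangle$.

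I would then list the admissible normalizing moves and their action on the coefficients $(\alpha_i,\beta_i)$: rescaling $x\mapsto\lambda x$; shifting $x\mapsto x+n$ with $n\in\mathfrak{m}_0$, which replaces $d$ by $d+[\,\cdot\,,n]$; and conjugating $d$ by an automorphism $\varphi$ of $\mathfrak{m}_0$. A short computation shows that $[\,\cdot\,,e_j]$ for $j\ge2$ realizes the $\alpha_{j+1}$ direction, so that shifts of $x$ eliminate every $\alpha_j$ with $j\ge3$. Among automorphisms I would use the diagonal family $e_1\mapsto ae_1,\ e_k\mapsto ba^{k-2}e_k$, which rescales off-diagonal coefficients while preserving the eigenvalues of $d$, and the unipotent automorphism $e_1\mapsto e_1+\delta e_2$ (fixing $e_k$ for $k\ge2$), under which one checks $\alpha_2\mapsto\alpha_2+\delta(\alpha_1-\beta_2)$ while $d(e_k)$ is unchanged for $k\ge2$; maps of the form $e_k\mapsto e_k+\gamma e_{k+1}$ act on the remaining higher $\beta_j$.

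The case split is governed by the semisimple part of $d$, whose eigenvalue on $e_1$ is $\alpha_1$ and on $e_k$ is $(k-2)\alpha_1+\beta_2$. Maximality of $\mathfrak{m}_0$ as a pro-nilpotent ideal forces $R$ to fail to be residually nilpotent, hence $d$ cannot be nilpotent, giving $(\alpha_1,\beta_2)\neq(0,0)$. This produces exactly the trichotomy $\alpha_1\neq0,\ \beta_2=0$; $\ \alpha_1\neq0,\ \beta_2\neq0$; $\ \alpha_1=0,\ \beta_2\neq0$, which I match to $R_1,R_2,R_3$. In each branch I rescale $x$ to normalize the leading eigenvalue to $1$, kill all $\alpha_j$ with $j\ge3$ by shifts, and dispose of $\alpha_2$ with the unipotent automorphism: since $\alpha_2$ is removable exactly when $\alpha_1\neq\beta_2$, it vanishes for $R_1$ (as $1\neq0$) and for $R_3$ (as $0\neq1$, so that in fact $d(e_1)=0$ and no $[e_1,x]$ term survives), while it is genuinely retained in the resonant subcase $\alpha_1=\beta_2$ that occurs inside $R_2$. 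The surviving higher coefficients constitute the parameter $\beta$.

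\textbf{The main obstacle} is the resonance bookkeeping: controlling how the diagonal, unipotent and shift transformations act simultaneously on the whole infinite array of coefficients and showing that the only off-diagonal term which cannot be normalized away is the resonant $\alpha_2e_2$ entering $R_2$. A second point needing care is to confirm, for each of the three families, that $\mathfrak{m}_0$ is indeed the maximal by inclusion pro-nilpotent ideal, equivalently that $d$ is not residually nilpotent, so that the listed algebras really lie in the class under study; residual solvability itself causes no trouble, as computing the derived series shows each $R_i(\mathfrak{m}_0,1,\beta)$ is in fact solvable of small derived length.
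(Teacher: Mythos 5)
Your proposal is correct and follows essentially the same route as the paper: restrict $ad_x$ to $\mathfrak{m}_0$, invoke Proposition~\ref{prop1}, derive the trichotomy on $(\alpha_1,\beta_2)\neq(0,0)$ from maximality of the pro-nilpotent ideal, and normalize via scaling of $x$, shifts $x\mapsto x+n$ with $n\in\mathfrak{m}_0$ (killing $\alpha_j$, $j\geq3$), and the substitution $e_1\mapsto e_1+\delta e_2$ (killing $\alpha_2$ off resonance) --- precisely the transformations used in the paper's Cases 1.1, 1.2 and 2. Your explicit resonance criterion ($\alpha_2$ removable iff $\alpha_1\neq\beta_2$, i.e.\ $\beta_2\neq1$ after scaling) in fact slightly sharpens the paper's $R_2(\mathfrak{m}_0,1,\beta)$, which retains $\alpha_2$ for all $\beta_2\neq0$; this refinement is compatible with the stated normal forms.
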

\begin{proof}
Remind that $R(\mathfrak{m}_0,1)=\mathfrak{m_0}\oplus Q$ is the solvable Lie algebra, where $\mathfrak{m}_0$ is the nilradical of $R(\mathfrak{m}_0,1)$ and $Q$ is the complementary subspace to $\mathfrak{m}_0$. Note that for any $x\in Q$ the inner derivation $ad_x\big|_{\mathfrak{m}_0}$ is non-nilpotent, therefore we can write the table of multiplications of $R(\mathfrak{m}_0,1)$ as follows
$$[e_{1},x]=\sum\limits_{k=1}^{t}\alpha_{k}e_{k}, \quad [e_{i},x]=((i-2)\alpha_{1}+\beta_{2})e_{i}+\sum\limits_{k=3}^{t}\beta_{k}e_{i+k-2},\ \ \text{where} \ k\geq 2.$$

Since $\mathfrak{m}_0$ is maximal by inclusion pro-nilpotent ideal, the algebras in Theorem \ref{thm1} are obtained via case by case consideration with respect to the parameters $\alpha_1$ and $\beta_2$ as follows.

\emph{\bf{Case 1.}} Let $\alpha_1\neq0.$ Then scaling $x$ by $\frac{1}{\alpha_1}$ we get
$$[e_1,x]=e_1+\sum\limits_{k=2}^{t}\alpha_{k}e_{k}, \quad [e_{i},x]=((i-2)+\beta_{2})e_{i}+\sum\limits_{k=3}^{t}\beta_{k}e_{i+k-2},\ \ \text{where} \ k\geq 2.$$

\emph{\bf{Case 1.1.}}
Let $\beta_2=0.$ Setting $e_1'=e_1+\alpha_2e_2$, without loss of generality, we can write
$$[e_1,x]=e_1+\sum\limits_{k=3}^{t}\alpha_{k}e_{k}, \quad
[e_i,x]=(i-2)e_i+\sum\limits_{k=3}^{t}\beta_{k}e_{i+k-2},\  i\geq2.$$
Then the base change $x'=x+\sum\limits_{k=2}^{t}\alpha_{k+1}e_{k}$ (the other basis vectors being unchanged) leads us to the required table of multiplications of $R_1(\mathfrak{m}_0,1,\beta)$. Note that the same is applied through the paper if only one basis vector's change is given.

\emph{\bf{Case 1.2.}}
Let $\beta_2\neq0.$
Then by changing $x$ as  $x'=x+\sum\limits_{k=2}^{t-1}\alpha_{k+1}e_{k}$,  we get the required table of multiplications of $R_2(\mathfrak{m}_0,1,\beta)$.


\emph{\bf{Case 2.}} Let now $\alpha_1=0$. Then $\beta_2\neq0$ (the otherwise case contradicts the maximal pro-nilpotency of $\mathfrak{m}_0$).
If we scale $x$ by $\frac{1}{\beta_2}$ then the relations are easily computed to be
$$[e_{1},x]=\sum\limits_{k=2}^{t}\alpha_{k}e_{k}, \quad [e_{i},x]=e_{i}+\sum\limits_{k=3}^{t}\beta_{k}e_{i+k-2},\  \ i\geq 2.$$

Let set $e_1'=e_1-\alpha_2e_2$. Then without loss of generality we write  $[e_{1},x]=\sum\limits_{k=3}^{t+1}\alpha_{k}e_{k}$
and applying the base change $x'=x+\sum\limits_{k=2}^{t}\alpha_{k+1}e_{k}$ we get the table of multiplication as in $R_3(\mathfrak{m}_0,1,\beta)$.
\end{proof}
The following proposition shows that the algebras {{$R_1(\mathfrak{m}_0,1,\beta)$, $R_2(\mathfrak{m}_0,1,\beta)$ and $R_3(\mathfrak{m}_0,1,\beta)$}} are not complete.

\begin{prop}{\label{lem1}} \emph{}

\begin{itemize}
  \item $d(e_i)=e_i,\ i\geq2$ is an outer derivation of the algebra {{$R_1(\mathfrak{m}_0,1,\beta)$}};
\item The derivations
      \begin{itemize}
  \item[$\ast$] $\left\{\begin{array}{ll}
d(e_1)=e_2,\\[1mm]
d(x)=\sum\limits_{i=2}^{s-1}\beta_{i+1}e_i,\\[1mm]
\end{array}\right.$ \quad if $\beta_2=1$,\\
 and
 \item[$\ast$] $\left\{\begin{array}{ll}
d(e_1)=\frac{\alpha_2}{\beta_2-1}e_2,\\[1mm]
d(e_i)=e_i,\ i\geq2,\\[1mm]
d(x)=\sum\limits_{i=2}^{s-1}\frac{\alpha_2\beta_{i+1}}{\beta_2-1}e_i,\\[1mm]
\end{array}\right.$ if $\beta_2\neq 1$\\
are outer derivations of the algebra $R_2(\mathfrak{m}_0,1,\beta)$, respectively
\end{itemize}
  \item $d(e_i)=e_{i+2},\ i\geq2,$ is an outer derivation of the algebra $R_3(\mathfrak{m}_0,1,\beta)$.
\end{itemize}
\end{prop}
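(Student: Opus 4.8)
The plan is to handle the three algebras and their candidate maps one at a time, and for each map $d$ to prove two separate things: that $d$ is a derivation of the relevant Lie algebra, and that $d$ is not inner. Since the derivation rule is bilinear, it suffices to check $d([u,v])=[d(u),v]+[u,d(v)]$ on pairs of basis vectors from $\{x,e_1,e_2,\dots\}$, reading brackets off the tables of Theorem \ref{thm1} and using antisymmetry, so that $[e_i,e_j]=0$ for $i,j\ge 2$ and $[e_1,e_i]=-e_{i+1}$ for $i\ge 2$. For $R_1(\mathfrak{m}_0,1,\beta)$ with $d(e_i)=e_i$ (and $d(e_1)=d(x)=0$) this is immediate, because every defining bracket involves only basis vectors of index $\ge 2$, on which $d$ is the identity; the case $R_3(\mathfrak{m}_0,1,\beta)$ with $d(e_i)=e_{i+2}$ goes the same way, using in addition that $[e_1,x]=0$ there.

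The only delicate verification is for $R_2(\mathfrak{m}_0,1,\beta)$, and it is concentrated on the single pair $(e_1,x)$, where the prescribed coefficients are exactly what make the identity hold. Writing $d(e_1)=\lambda e_2$, the term $[d(e_1),x]=\lambda[e_2,x]$ produces $\lambda\beta_2e_2$ together with a tail $\lambda\sum_{k\ge3}\beta_ke_k$, and the piece $[e_1,d(x)]$ is built precisely to cancel that tail; this is what dictates the shape of the sum defining $d(x)$, and it is where I would check that all sums are finite with top index $t$. The surviving $e_2$-coefficients then govern $\lambda$: in the branch $\beta_2\ne1$, where $d(e_i)=e_i$, the identity reduces to $\lambda(\beta_2-1)=\alpha_2$, forcing $\lambda=\frac{\alpha_2}{\beta_2-1}$, whereas in the branch $\beta_2=1$, where $d(e_i)=0$, the $\alpha_2$-term drops out of $d([e_1,x])$ and $d(e_1)=e_2$ is consistent for free.

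The main obstacle is proving that none of these $d$ is inner, and this is the part I would write out carefully. Writing a general element as $y=cx+\sum_{j\ge1}a_je_j$ (a finite sum, by the standing convention on the algebra) and computing $ad_y(e_i)=[y,e_i]$, the key structural fact is that the $\mathfrak{m}_0$-part of $ad_y$ strictly raises indices, since $[e_j,e_1]=e_{j+1}$ and $[e_1,e_i]=-e_{i+1}$, while the $cx$-part contributes to $ad_y(e_i)$ a coefficient of $e_i$ equal to $-c(i-2+\beta_2)$ (with $\beta_2=0$ for $R_1$ and the constant $-c$ for $R_3$). I would isolate this single coefficient and compare it with the $e_i$-coefficient of $d(e_i)$: for $R_1$ the choice $i=2$ already forces the impossible $0=1$; for $R_2$ with $\beta_2\ne1$ the equations at $i=2$ and $i=3$ are incompatible; and for $R_3$ and for $R_2$ with $\beta_2=1$ this step yields $c=0$. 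Once $c=0$, the operator $ad_y$ is the adjoint of an element of $\mathfrak{m}_0$, so $ad_y(e_i)=-a_1e_{i+1}$ for $i\ge2$ while $ad_y(e_1)=\sum_{j\ge2}a_je_{j+1}\in\langle e_3,e_4,\dots\rangle$; matching against $d$ then produces a contradiction in every remaining case, since $-a_1e_{i+1}$ is neither $e_i$ nor $e_{i+2}$, and $d(e_1)=e_2\notin\langle e_3,e_4,\dots\rangle$. The only real difficulty throughout is the index bookkeeping, which I would control by always examining the coefficient of one carefully chosen basis vector.
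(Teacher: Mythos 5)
Your proof is correct, but your non-innerness argument takes a genuinely different route from the paper's. The paper disposes of $R_1(\mathfrak{m}_0,1,\beta)$ with a single structural observation: the image of every inner derivation lies in the derived subalgebra, and for $R_1$ one has $[R_1,R_1]=\mathrm{Span}\{e_1,e_3,e_4,\dots\}$, which misses $e_2$, whereas $d(e_2)=e_2$; it then merely asserts that ``the similar argument can be applied'' to the remaining cases. You instead take a general element $y=cx+\sum_j a_je_j$ (finite sum), isolate the $e_i$-coefficient of $ad_y(e_i)$, and derive a contradiction case by case. This is more work for $R_1$, but it is strictly more robust: for $R_3$ the derived subalgebra is $\mathrm{Span}\{e_2,e_3,\dots\}$ and contains every $d(e_i)=e_{i+2}$, and for $R_2$ (where $\beta_2\neq 0$ by construction) the derived subalgebra is all of $\mathfrak{m}_0$, so the paper's containment test does not by itself rule out innerness in those cases --- a coefficient comparison of exactly the kind you carry out is what is actually needed, and your argument supplies it uniformly (the eigenvalue equations $-c(i-2+\beta_2)=1$ at $i=2,3$ for $R_2$ with $\beta_2\neq1$, the reduction to $c=0$ followed by $ad_y(e_1)\in\mathrm{Span}\{e_3,e_4,\dots\}$ against $d(e_1)=e_2$ for $\beta_2=1$, and $-a_1e_{i+1}\neq e_{i+2}$ for $R_3$ are all sound). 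In this sense your proposal is more complete than the paper's proof, not less. One small wording slip: your justification of the derivation property for $R_1$ (``every defining bracket involves only basis vectors of index $\geq 2$'') is not literally true, since $[e_i,e_1]=e_{i+1}$ and $[e_1,x]=e_1$ involve $e_1$; the verification still goes through because $d(e_1)=d(x)=0$, i.e., $d$ is diagonal with eigenvalue $0$ on $e_1,x$ and $1$ on $e_i$, $i\geq2$, which is compatible with the weights of all structure constants --- so this is a phrasing issue, not a gap. Likewise your observation that the sums in $d(x)$ must terminate at index $t-1$ (so $s=t$) to cancel the tail $\sum_{k=3}^{t}\beta_ke_k$ in $[d(e_1),x]$ is exactly the right bookkeeping for $R_2$.
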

\begin{proof} It is obvious that $d(e_i)=e_i,\ i\geq2$ is a derivation of $R_1(\mathfrak{m}_0,1,\beta)$. Moreover, for any {{$a \in R_1(\mathfrak{m}_0,1,\beta)$ we have $ad_a(R_1(\mathfrak{m}_0,1,\beta))\subseteq[R_1(\mathfrak{m}_0,1,\beta),R_1(\mathfrak{m}_0,1,\beta)]=R_1(\mathfrak{m}_0,1,\beta)\setminus\{e_2\}$. Thus, $d$ is not inner derivation of $R_1(\mathfrak{m}_0,1,\beta)$.}}

The similar argument can be applied to prove the rest parts of the proposition.
\end{proof}

Recall that in \cite{Qobil2},
 the following residually solvable Lie algebra $R(\mathfrak{m}_0,2,\beta)$, with a property that the codimension of $\mathfrak{m}_0$ is maximal, was obtained
$$R(\mathfrak{m}_0,2,\beta):\left\{\begin{array}{ll}
[e_{i} ,e_{1}]=e_{i+1},\quad i\geq2,\\[1mm]
[e_1,x]=e_1,\\[1mm]
[e_i,x]=(i-1)e_i+\sum\limits_{q=3}^{t}\beta_{q}e_{q+i-2},\quad i\geq 2,\\[1mm]
[e_i,y]=e_i,\quad i\geq 2,
 \end{array}\right.$$
where $\beta=(\beta_3,\beta_4,\dots,\beta_t)\in\mathbb{C}^{t-2} \ \text{for \ some}\  t\in \mathbb{N},$ where $\{x,y\}$ is the basis of $Q$ and $\{e_1, e_2,\dots\}$ is basis of $\mathfrak{m}_0$. Here is a theorem on the derivations of $R(\mathfrak{m}_0,2,\beta).$

\begin{thm}\label{thm3_3}\label{t1} An arbitrary algebra of the family $R(\mathfrak{m}_0,2, \beta)$ is complete.
\end{thm}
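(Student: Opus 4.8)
The plan is to establish the two defining properties of completeness separately: that $\mathrm{Center}\big(R(\mathfrak{m}_0,2,\beta)\big)=0$, and that every derivation is inner (i.e. $\mathrm{H}^1=0$). The center is disposed of by a short direct computation. Writing a general finitely supported element as $z=\lambda x+\mu y+\sum_{k\ge 1}c_k e_k$, one brackets it successively against $e_1$, $x$ and $e_2$. Since $[z,e_1]=-\lambda e_1+\sum_{k\ge 2}c_k e_{k+1}$ (using $[x,e_1]=-e_1$, $[y,e_1]=0$ and $[e_k,e_1]=e_{k+1}$), its vanishing forces $\lambda=0$ and $c_k=0$ for $k\ge 2$; then $[z,x]=c_1 e_1$ forces $c_1=0$; and finally $[z,e_2]=-\mu e_2$ forces $\mu=0$. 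Hence $z=0$ and the center is trivial.

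For the derivations, the conceptual engine is that $ad_{x}$ and $ad_{y}$ are commuting inner derivations (as $[x,y]=0$), and that $ad_{y}$ is diagonal: it has eigenvalue $0$ on $\langle x,y,e_1\rangle$ and $-1$ on $\langle e_i:i\ge 2\rangle$. This induces a grading $L=L_0\oplus L_{-1}$ and a corresponding weight decomposition $d=d_{-1}+d_0+d_1$ of any derivation $d$ under the adjoint action $[ad_{y},-]$. The one-line identity $[ad_{y},d_w]=-ad_{d_w(y)}$ (an immediate consequence of the derivation rule) shows that each nonzero-weight piece satisfies $w\,d_w=-ad_{d_w(y)}$, so $d_w=ad_{-w^{-1}d_w(y)}$ is inner (and in fact $d_1=0$, since $L_1=0$). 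This reduces the problem to a derivation $d=d_0$ that preserves both $ad_{y}$-eigenspaces.

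For such a $d_0$ I would proceed on generators: the algebra is generated by $x,y,e_1,e_2$, since $e_{i+1}=[e_i,e_1]$ propagates $d$ to all $e_i$ via $d(e_{i+1})=[d(e_i),e_1]+[e_i,d(e_1)]$. After subtracting the weight-zero inner derivations $ad_{x},ad_{y},ad_{e_1}$ to normalize $d_0$, I would exploit that $ad_{x}$ acts on $\langle e_i:i\ge 2\rangle$ triangularly with the pairwise distinct eigenvalues $-(i-1)$, forcing $d_0$ to act as a scalar $c_i$ on each $e_i$. The defining relations then give a finite set of constraints: $[e_i,e_1]=e_{i+1}$ yields the recursion $c_{i+1}=c_i+c_1$; $[e_1,x]=e_1$ and $[e_i,y]=e_i$ pin down the $\langle x,y\rangle$-components of $d_0(x),d_0(y)$; and crucially $[e_i,x]=(i-1)e_i+\sum_{q}\beta_q e_{q+i-2}$ produces the coupling $\beta_q\,(c_{q+i-2}-c_i)=0$, which forces $c_1=0$ whenever some $\beta_q\neq 0$. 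Matching the resulting data against $\rho\,ad_{x}+\sigma\,ad_{y}$ shows $d_0$ is inner in every case, completing the proof.

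The main obstacle is precisely the $\beta$-deformation. It makes $ad_{x}$ non-semisimple, so one cannot simultaneously diagonalize $ad_{x}$ and $ad_{y}$ inside the finite-support algebra (the putative common eigenvectors would have infinite support), and the finer separation of the $e_i$ cannot simply be outsourced to a torus. One must instead extract it from the recursion together with the triangularity of $ad_{x}$, while tracking the $\beta$-coupling that distinguishes the cases $\beta=0$ and $\beta\neq 0$ and ensuring that every representing element stays finitely supported. Keeping this bookkeeping consistent across the infinite family $\{e_i\}$ is the delicate part of the argument.
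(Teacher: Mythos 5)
Your proof is correct, but it takes a genuinely different route from the paper. The paper's argument is extrinsic: it passes to the finite-dimensional quotients $R(\mathfrak{m}_0,2,\beta)/\mathfrak{m}_0^k$, identifies each one (via uniqueness of the maximal solvable extension of the truncated filiform algebra) with an algebra known from \cite{Ancochea1} to have only inner derivations, checks that $\mathfrak{m}_0^k$ is invariant under any derivation $d$ so that $d$ induces $ad_{\bar c_k}$ on each quotient, and then shows the representing elements $c_k$ stabilize for $k\geq\max\{s,t\}$, giving $d=ad_c$ on an exhausting union of subspaces. Your argument is intrinsic and self-contained: you exploit the semisimple inner derivation $ad_y$, whose eigenspaces give the grading $L=L_0\oplus L_{-1}$ with $L_0=\langle x,y,e_1\rangle$, decompose $d=d_{-1}+d_0+d_1$, and use the weight-$w$ component of $[ad_y,d]=-ad_{d(y)}$, namely $w\,d_w=-ad_{d_w(y)}$, to get the nonzero-weight pieces inner at once (with $d_1=0$ since $d_1(y)\in L_1=0$); the residual weight-zero piece is then settled on the generators. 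One caution on your weight-zero step: triangularity of $ad_x$ with distinct eigenvalues does not by itself force $d_0$ to be diagonal on $\langle e_i\rangle$ (the weight-zero inner derivation $ad_{e_1}$ acts as a shift), so what actually kills the off-diagonal bands is the combination of the recursion $d_0(e_{i+1})=[d_0(e_i),e_1]+[e_i,d_0(e_1)]$, which makes $d_0|_{L_{-1}}$ a convolution operator $e_i\mapsto\sum_j m_j e_{j+i-2}$, and the relation $[e_i,x]=(i-1)e_i+\sum_q\beta_q e_{q+i-2}$, whose coefficient comparison yields $(2-j)m_j=0$ for $j\geq3$ (the eigenvalue differences you invoke) together with the $y$-component obstruction $p_2=0$ and your coupling $\beta_q(c_{q+i-2}-c_i)=0$; since your outline already lists exactly these relations, the gap is one of phrasing rather than substance. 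What each approach buys: yours avoids any appeal to finite-dimensional classification or cohomology results and all inverse-limit bookkeeping, and finite support of representing elements is automatic since $d_w(y)$ is a component of $d(y)$; the paper's quotient-and-stabilize scheme, though dependent on external inputs, is uniform and is reused essentially verbatim for the Leibniz analogues (Theorems \ref{t2}, \ref{t3}) and in spirit for the $H^2$ computation.
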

\begin{proof}  The fact that the center is trivial can be easily obtained by using the table multiplications. We prove that all derivations of $R(\mathfrak{m}_0,2, \beta)$ are inner.
Note that $\{e_1, e_2, x,y\}$ are generators of $R(\mathfrak{m}_0,2,\beta)$. Since a derivation is completely determined by its values on generators it is sufficient to prove the existence of $a\in R(\mathfrak{m}_0,2,\beta)$ such that $d(z)=ad_a(z)$, where $z$ is either of $e_1, e_2, x, y$.

Observe that for any $k\in \mathbb{N}$ the quotient algebra
$$(R(\mathfrak{m}_0,2,\beta))_k:=R(\mathfrak{m}_0,2,\beta)/\mathfrak{m}_0^{k}=\mathfrak{m}_0/\mathfrak{m}_0^k\oplus Q=\overline{\mathfrak{m}}_0\oplus Q \ \mbox{with} \ \mathfrak{m}_0^k=\mathrm{Span}\{e_{k+1}, \dots \}$$
is finite-dimensional solvable Lie algebra, which is maximal solvable extension of the nilpotent Lie algebra $\mathfrak{m}_0$. Since such an algebra is unique up to isomorphism the algebra $(R(\mathfrak{m}_0,2,\beta))_k$ must be isomorphic to $(R(\mathfrak{m}_0,2,0))_k$. However, according to \cite{Ancochea1}, all its derivations are inner.

Let $d\in Der(R(\mathfrak{m}_0,2, \beta))$ and introduce $\bar{d} \in Der((R(\mathfrak{m}_0,2,\beta))_k)$ such that $\bar{d}(\bar{v})=\overline{d(v)},\ \ \bar{v}=v+\mathfrak{m}_0^k.$ The function $\bar{d}(\bar{v})$ is well-defined, i.e., $\mathfrak{m}_0^k$ is invariant under $d$.

Indeed,
$$d(\mathfrak{m}_0)=d_{\mathfrak{m}_0}(\mathfrak{m}_0)+d_Q(\mathfrak{m}_0),\ \mbox{where} \  d_{\mathfrak{m}_0} : \mathfrak{m}_0 \rightarrow \mathfrak{m}_0 , \quad d_Q : \mathfrak{m}_0 \rightarrow Q$$

Taking into account that $[Q,Q]=0$ and $\mathfrak{m}_0$ is an ideal of $R(\mathfrak{m}_0,2, \beta)$ such that $[\mathfrak{m}_0,Q]=\mathfrak{m}_0$ we derive
$$d(\mathfrak{m}_0)=d([\mathfrak{m}_0,Q])=[d(\mathfrak{m}_0),Q]+[\mathfrak{m}_0,d(Q)]=
[d_{\mathfrak{m}_0}(\mathfrak{m}_0),Q]+[\mathfrak{m}_0,d(Q)]\subseteq \mathfrak{m}_0.$$

Consequently, we get $$d_Q(\mathfrak{m}_0)=0, \quad d(\mathfrak{m}_0)\subseteq \mathfrak{m}_0.$$
These imply $d(\mathfrak{m}_0^k)\subseteq \mathfrak{m}_0^k$ for any $k\in \mathbb{N}.$ Thus the well-definedness of $\bar{d}$ is shown.

Set
$$d(e_1)=\sum\limits_{i=1}^{s}a_{i}e_i, \quad d(e_2)=\sum\limits_{i=1}^{s}b_{i}e_i, \quad
  d(x)=\sum\limits_{i=1}^{s}\gamma_{i}e_i+\gamma_{1,1}x+\gamma_{2,2}y, \quad d(y)=\sum\limits_{i=1}^{s}\tau_{i}e_i+\tau_{1,1}x+\tau_{2,2}y.
$$
Let us take $k\geq \max\{s,t\}$. Then $\overline{d(v)}=\bar{d}(\bar{v})=ad_{\bar{c}_k}$ for some $\bar{c}_k=c_k+\mathfrak{m}_0^k$ and for any $\bar{v}=v+\mathfrak{m}_0^k.$

Let
$$\bar{c}_k=\sum\limits_{i=1}^{k}\alpha_{k,i}e_i+\lambda_{k}x+\mu_{k}y+\mathfrak{m}_0^k.$$
Then from the equalities
$$\overline{d(e_1)}=[\overline{e}_1,\bar{c}_k], \quad \overline{d(e_2)}=[\overline{e}_2,\bar{c}_k], \quad \overline{d(y)}=[\overline{y},\bar{c}_k]$$
we get
$$\begin{array}{lll}
\sum\limits_{i=1}^{s}a_{i}e_i-\lambda_ke_1+\sum\limits_{i=2}^{k-1}\alpha_{k,i}e_{i+1}\in \mathfrak{m}_0^k,\\[1mm]
\sum\limits_{i=1}^{s}b_{i}e_i-(\lambda_k+\mu_k)e_2-(a_{k,1}+\beta_3)e_3-
\lambda_k\sum_{q=4}^{t}\beta_qe_q\in\mathfrak{m}_0^k,\\[1mm]
\sum\limits_{i=1}^{s}\tau_{i}e_i+\tau_{1,1}x+\tau_{2,2}y+\sum\limits_{i=2}^{k}\alpha_{k,i}e_i\in \mathfrak{m}_0^k.\end{array}$$
Comparing the coefficients of the basis vectors we derive
\begin{equation}\label{eq2}
\left\{\begin{array}{llll}
a_1=\lambda_k,\quad  a_i=-\alpha_{k,i-1},\quad 3\leq i\leq s,\quad \alpha_{k,i}=0,\quad s\leq j\leq k-1, \\[1mm]
b_2=\lambda_k+\mu_k,\quad  b_3=\alpha_{k,1}+\beta_3. \\[1mm]
\tau_i=-\alpha_{k,i},\quad 2\leq i\leq s-1,\quad \alpha_{k,k}=0,\\[1mm]
\end{array}\right.
\end{equation}

From \eqref{eq2} we obtain $c_k=(b_3-\beta_3)e_1-\sum\limits_{i=2}^{s-1}a_{i+1}e_i+a_1x+(b_2-a_1)y$. Hence, $c_k$ depends only on the parameters $a_i,\tau_i, \ b_1, \ b_2,\ 1\le i\le s.$ Therefore, $c_k=c_{k+1}$ for any $k\geq \max\{s,t\}.$ Thus, setting $c:=c_k$ and $W_k=\mathrm{Span}\{x, y,  e_1, \dots, e_{k}\}$ we get
$d(z)_{|W_k}=ad_{c}(z)_{|W_k} \ \mbox{for} \ z\in \{e_1, e_2, x,y\} \ \mbox{and} \  k\geq \max\{s,t\}.$
Now taking into account that $\bigcup_{k=1}^{\infty}W_k=R(\mathfrak{m}_0,2,\beta)$ we obtain $d=ad_{c}.$
\end{proof}
Let us now treat the second cohomology groups of $R(\mathfrak{m}_0,2,\beta)$.
\begin{thm}\label{thm5} The second cohomology groups 
$H^2(R(\mathfrak{m}_0,2,\beta),R(\mathfrak{m}_0,2,\beta))$ of $R(\mathfrak{m}_0,2,\beta)$ are trivial.
\end{thm}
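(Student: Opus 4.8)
The plan is to show that every $2$-cocycle $\varphi\in Z^2(R,R)$, where $R:=R(\mathfrak{m}_0,2,\beta)$, lies in $B^2(R,R)$, and the main device is a weight (Jordan) decomposition of the cochain complex $C^\bullet(R,R)=\Hom(\wedge^\bullet R,R)$ under a maximal torus of inner derivations. Since $R$ is complete by Theorem \ref{t1}, the semisimple part $(\mathrm{ad}_x)_s$ of $\mathrm{ad}_x$ is again a derivation and hence is inner, say $(\mathrm{ad}_x)_s=\mathrm{ad}_{h}$; together with $\mathrm{ad}_y$ — which is already semisimple, with eigenvalues $0$ on $\langle e_1,x,y\rangle$ and $-1$ on $\langle e_2,e_3,\dots\rangle$ — it spans a two-dimensional torus $\mathfrak{t}$ of commuting semisimple derivations (they commute because $[x,y]=0$ gives $[\mathrm{ad}_y,\mathrm{ad}_x]=0$, and $(\mathrm{ad}_x)_s$ is a polynomial in $\mathrm{ad}_x$). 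The joint $\mathfrak{t}$-weights of the basis are distinct: $e_1$ has weight $(1,0)$, each $e_i$ with $i\ge2$ has weight $(i-1,1)$, and $x,y$ have weight $(0,0)$.

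First I would invoke the Cartan homotopy identity $L_w=d\,\iota_w+\iota_w\,d$ on $C^\bullet(R,R)$, valid for every $w=\mathrm{ad}_w$ and the Chevalley--Eilenberg differential $d$. For a cocycle one has $L_w\varphi=d(\iota_w\varphi)$, so each nonzero weight component of $\varphi$ is a coboundary; taking a generic $w\in\mathfrak{t}$ and the correcting $1$-cochain $\iota_w\bigl(\sum_{\lambda\ne0}\lambda^{-1}\varphi_\lambda\bigr)$ — which is well defined because it is finitely supported on each basis vector, so the reduction survives $\dim R=\infty$ and the unbounded spectrum of $\mathrm{ad}_h$ — I may assume $\varphi$ has joint weight zero, i.e. $\varphi(u,v)$ always lies in the weight space of $\mathrm{wt}(u)+\mathrm{wt}(v)$. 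Since the only weights present are $(0,0)$, $(1,0)$ and $(i-1,1)$, this is very rigid: it forces $\varphi(e_i,e_j)=0$ for $i,j\ge2$, $\varphi(e_1,e_i)\in\langle e_{i+1}\rangle$, $\varphi(e_i,x),\varphi(e_i,y)\in\langle e_i\rangle$, $\varphi(e_1,x),\varphi(e_1,y)\in\langle e_1\rangle$ and $\varphi(x,y)\in\langle x,y\rangle$.

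With $\varphi$ in this normal form the key consequence is $\varphi(\mathfrak{m}_0^{\,k},R)\subseteq\mathfrak{m}_0^{\,k}$ for every $k$, so $\varphi$ descends to a $2$-cocycle $\bar{\varphi}_k$ on the finite-dimensional quotient $R_k:=R/\mathfrak{m}_0^{\,k}$. As in the proof of Theorem \ref{t1}, each $R_k$ is the unique maximal solvable extension of a finite-dimensional filiform nilpotent Lie algebra, and for these algebras the second Chevalley cohomology is trivial by \cite{Ancochea1,Leger}; hence $\bar{\varphi}_k=\delta\bar{f}_k$ for some $\bar{f}_k\in\Hom(R_k,R_k)$. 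It then remains to glue these witnesses: I would lift $\bar{f}_k$ to $f_k\colon R\to R$, use the recursion coming from $e_{i+1}=[e_i,e_1]$ (apply the cocycle identity to the triple $(e_i,e_1,\cdot)$) to show that on each fixed basis vector the $f_k$ stabilize for large $k$, and set $f$ equal to this limit, exactly paralleling the stabilization $c_k=c_{k+1}$ in Theorem \ref{t1}. Then $\varphi=\delta f$ on each $W_k=\langle x,y,e_1,\dots,e_k\rangle$, and since $\bigcup_k W_k=R$ we conclude $\varphi\in B^2(R,R)$.

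The hard part will be the gluing step, i.e. choosing the finite-dimensional witnesses $\bar{f}_k$ coherently so that they converge to one globally defined endomorphism $f$: unlike a derivation, a primitive of a $2$-cocycle is determined only up to a $1$-cocycle (a derivation), so this ambiguity must be fixed compatibly across all $k$, which is where I would use $H^1(R,R)=0$ from completeness to absorb the discrepancies into inner derivations. A secondary technical point is to check carefully that $(\mathrm{ad}_x)_s$ really is a derivation of the infinite-dimensional $R$ and that the homotopy sums stay locally finite; both follow from the block-triangular, locally finite action of $\mathrm{ad}_x$ on $\mathfrak{m}_0$ together with completeness. Alternatively, once $\varphi$ is in the weight-zero normal form it depends only on the scalars attached to $\varphi(e_1,e_i)$, $\varphi(e_i,x)$, $\varphi(e_i,y)$, $\varphi(e_1,x)$, $\varphi(e_1,y)$, $\varphi(x,y)$, and one can finish instead by constructing $f$ directly on generators and verifying, via the identities $Z(e_1,e_i,x)=Z(e_1,e_i,y)=Z(e_i,e_j,x)=0$, that $\varphi-\delta f$ vanishes identically.
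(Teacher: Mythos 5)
Your argument hinges on the existence of a two-dimensional torus of semisimple derivations, and that is precisely where it breaks down for $\beta\neq 0$. While $ad_y$ is genuinely semisimple (eigenvalues $0$ on $e_1,x,y$ and $-1$ on the $e_i$, $i\geq 2$), the operator $ad_x$ has no semisimple part acting on $R=R(\mathfrak{m}_0,2,\beta)$: solving $[v,x]=(i-1)v$ with $v=e_i+\sum_{k>i}c_ke_k$ forces the recursion $(k-i)c_k=-\bigl(\beta_{k-i+2}+\sum_{j}\beta_jc_{k-j+2}\bigr)$, whose solution is generically an \emph{infinite} formal series, i.e.\ an element of the completion, not of $R$ (elements of $R$ are finite linear combinations of basis vectors). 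The naive diagonal truncation $S(e_1)=e_1$, $S(e_i)=(i-1)e_i$ is not a derivation either: $S([e_i,x])-[S(e_i),x]-[e_i,S(x)]=\sum_{q}(q-2)\beta_qe_{q+i-2}\neq 0$ when $\beta\neq0$. Nor can you repair this with an inner correction: for $h=x+m$, $m\in\mathfrak{m}_0$, the same recursion shows $ad_h$ is semisimple only for special $\beta$, so completeness (Theorem \ref{t1}) does not rescue you — there is simply no derivation of $R$, inner or not, equal to $(ad_x)_s$. (Your parenthetical ``$(ad_x)_s$ is a polynomial in $ad_x$'' is a finite-dimensional fact and is unavailable here.) This is not a technicality: the finite-dimensional quotients $R_k$ are all isomorphic to the graded algebra $(R(\mathfrak{m}_0,2,0))_k$, but the infinite-dimensional algebras are only \emph{pro}-isomorphic to the $\beta=0$ case, which is exactly why the paper carries the parameters $\beta$; a global weight decomposition would in effect diagonalize $\beta$ away, and the category of finite-sum algebras does not permit that.

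Everything downstream of the torus collapses with it: the refined normal form ($\varphi(e_1,e_i)\in\langle e_{i+1}\rangle$, $\varphi(e_i,x)\in\langle e_i\rangle$, etc.), and crucially the descent property $\varphi(\mathfrak{m}_0^{\,k},R)\subseteq\mathfrak{m}_0^{\,k}$ that lets you pass to the quotients $R_k$ and invoke \cite{Ancochea1, Leger}. Using $ad_y$ alone, the Cartan homotopy $L_y=d\,\iota_y+\iota_y\,d$ is legitimate and does give $\varphi(e_i,e_j)=0$ for $i,j\geq2$ after correction (no vector has $ad_y$-weight $-2$), but the remaining weight constraints only place $\varphi(e_1,e_i)$ and $\varphi(e_i,x)$ in $\mathrm{Span}\{e_j:j\geq2\}$ with no control on the lower index $j$ relative to $i$, so $\varphi$ need not preserve the filtration by the $\mathfrak{m}_0^{\,k}$ and the finite-dimensional reduction plus gluing never gets off the ground. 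The paper avoids all of this: it constructs an explicit $f\in\Hom(R,R)$ from the coefficients of $\varphi$, sets $\chi=\varphi-\delta f$, and extracts the vanishing of $\chi$ directly from the cocycle identities $Z(e_1,x,y)=Z(e_i,e_j,y)=Z(e_i,e_1,y)=Z(e_i,x,y)=Z(e_i,e_1,x)=0$, using the remaining free components of $f$ to kill $\chi(e_i,e_1)$ and $\chi(e_2,x)$. Your fallback paragraph (define $f$ on generators and verify via $Z(e_1,e_i,x)=Z(e_i,e_j,x)=0$) is essentially that computation, but as written it is contingent on the weight-zero normal form you cannot reach; to fix the proof you would need to carry it out for a general cocycle, i.e.\ redo the paper's direct calculation.
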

\begin{proof} Let $\varphi\in Z^2(R(\mathfrak{m}_0,2,\beta),R(\mathfrak{m}_0,2,\beta))$. We prove that there exists a $f\in \text{Hom}(R(\mathfrak{m}_0,2,\beta),R(\mathfrak{m}_0,2,\beta))$ that generates $\varphi$ as a coboundry. An element $\varphi$ of $Z^2(R(\mathfrak{m}_0,2,\beta),R(\mathfrak{m}_0,2,\beta))$ on the basis $\{x,y, e_1, e_2, \dots\}$ is written in the form
$$\begin{array}{lll}
\varphi(e_i,e_j)=\sum\limits_{k=1}^{p_{i,j}}a^{i,j}_ke_k+a^{i,j}_{1,1}x+a^{i,j}_{2,2}y, & \varphi(e_i,x)=\sum\limits_{k=1}^{p_{i}}b^{i}_ke_k+b^{i}_{1,1}x+b^{i}_{2,2}y,\\[2mm]
\varphi(e_i,y)=\sum\limits_{k=1}^{q_{i}}c^{i}_ke_k+c^{i}_{1,1}x+c^{i}_{2,2}y,&
\varphi(x,y)=\sum\limits_{k=1}^{s}g_ke_k+g_{1,1}x+g_{2,2}y, \quad i\geq 1.\\[2mm]
\end{array}$$

We choose $f\in \text{Hom}(R(\mathfrak{m}_0,2,\beta),R(\mathfrak{m}_0,2,\beta))$ as follows
 $$\begin{array}{ll}
f(e_1)=\tau_1^1e_1-c_2^1e_2-\sum\limits_{k=3}^{s_1}\tau_k^1e_k+b_{1,1}^1x+b_{2,2}^1y,\\[2mm]
f(e_i)=c_1^ie_1+\sum\limits_{k=2}^{s_i}\tau_k^ie_k+c_{1,1}^ix+c_{2,2}^iy,\\[2mm]
f(x)=\mu_1e_1+\sum\limits_{k=2}^{s_1-1}(b_{k+1}^1-c_2^1\beta_{k+1}+(k-1)\tau_{k+1}^1+\sum\limits_{i=3}^{k}\tau_i^1\beta_{k+3-i})e_k-b_1^1x+\mu_{2,2}y,\\[2mm]
f(y)=g_1e_1+\sum\limits_{k=2}^{s}(c_{k+1}^1+\tau_{k+1}^1)e_k-c_1^1x+\nu_{2,2}y, \ s=max\{q_1,s_1\}.\\
\end{array}$$

Consider the cocycle $\chi=\varphi-\psi\in Z^2(R(\mathfrak{m}_0,2,\beta),R(\mathfrak{m}_0,2,\beta))$, where $\psi(x,y)=f([x,y])-[f(x),y] - [x,f(y)]$ and show that $\chi $ is trivial. In the expension of the cocycle $\chi$ via the basis its components will be written as functions of the components of $f$ and $\varphi$
$$\begin{array}{ll}
\chi(e_1,x)=b_2^1e_2,\quad  \chi(e_1,y)=c_{1,1}^{1}x+c_{2,2}^{1}y,\\[1mm]
\chi(x,y)=\sum\limits_{k=2}^{s}g_k'e_k+g_{1,1}'x+g_{2,2}'y,\\[1mm]
\chi(e_i,e_j)=\sum\limits_{k=1}^{p_{i,j}}a^{i,j}_ke_k+a^{i,j}_{1,1}x+a^{i,j}_{2,2}y,\quad i\geq 1,\\[1mm]
\chi(e_i,x)=\sum\limits_{k=1}^{p_{i}}b^{i}_ke_k+b^{i}_{1,1}x+b^{i}_{2,2}y,\quad i\geq 2,\\[1mm]
\chi(e_i,y)=\sum\limits_{k=2}^{q_{i}}c^{i}_ke_k+g_1e_{i+1}-c_1^1((i-2)e_i+\sum\limits_{k=3}^{t}\beta_ke_{k+i-2})+\nu_{2,2}e_i, \quad i\geq2.\\[1mm]
\end{array}$$

Now we impose to $\chi$ the cocycle identities $Z=0$ (see (\ref{Coc})) to derive the following set of constraints:

\begin{center}
    \begin{tabular}{lll}
       \qquad\quad 2-cocyle identity & &\qquad\qquad\qquad Constraints\\
        \hline \hline
\\
        $Z(e_1,x,y)=0,$ &\quad $\Rightarrow $\quad & $\left\{\begin{array}{ll}
                                                                 g_k'=0,\ 2\le k\le s, \ g_{1,1}'=b_2^1=c_{1,1}^1=c_{2,2}^1=0,\\[1mm]

                                                                 \end{array}\right.$\\[1mm]
        $Z(e_i,e_j,y)=0,\ i,j\geq2, $ &\quad $\Rightarrow $\quad & $\left\{\begin{array}{ll}
                                                                 a_k^{i,j}=a_{1,1}^{i,j}=a_{2,2}^{i,j}=0,\  i,j\geq2, \ k\geq1,\\[1mm]
                                                                 \end{array}\right.$\\[1mm]
        $Z(e_i,e_1,y)=0,\ i\geq2, $ &\quad $\Rightarrow $\quad & $\left\{\begin{array}{ll}
                                                                 a_1^{i,1}=a_{1,1}^{i,1}=a_{2,2}^{i,1}=0, \ i\geq2,\\[1mm] c_k^k=c_2^2+(k-2)c_1^1,\ k\geq3,\\[1mm]
                                                                 c_{k}^i=0,\ i\geq3,\ 2\le k\le i-1,\\[1mm]
                                                                  c_k^i=c_{k-i+2}^{2},\ k>i\geq3,\\[1mm]
 \end{array}\right.$\\[1mm]
 $Z(e_i,x,y)=0,\ i\geq2, $ &\quad $\Rightarrow$ \quad &$\left\{\begin{array}{ll}
                                                                  g_{2,2}=b_1^{i}=b_{1,1}^i=b_{2,2}^i=0,\ i\geq2.\\[1mm]
                                                                  \end{array}\right.$

             \end{tabular}
\end{center}

We let
$$\begin{array}{lll}
\tau_{i+1}^{i+1}=\tau_{i}^{i}+\tau_{1}^1+a_{i+1}^{i,1}+b_{1,1}\beta_3,\\[1mm]
\tau_{i}^{i+1}=\tau_{i-1}^{i}+a_{i}^{i,1}+(i-1)b_{1,1}+b_{2,2},\quad
\tau_{2}^{i+1}=a_{2}^{i,1},\\[1mm]
\tau_{k}^{i+1}=\tau_{k-1}^{i}+a_{k}^{i,1}+b_{1,1}\beta_{k+2-i},\quad 3\leq k\leq t+i-2,\ k\neq \{i,i+1\},\quad i\geq 2,
\end{array}$$
to get  $(\varphi-\psi)(e_i,e_1)=\chi(e_i,e_1)=0,\ i\geq 1$.

Letting
$$\begin{array}{ll}
\mu_{2,2}=b_1^1-b_2^2+\sum\limits_{j=3}^{t}\beta_j\tau_2^j,\\[1mm]
\tau_{3}^2=b_1^1\beta_3-\mu_1-b_3^2-\tau_2^2\beta_3+\sum\limits_{j=3}^{t}\beta_j\tau_3^j,\\[1mm]
\tau_{i}^2=\frac{1}{i-2}(b_1^1\beta_i-b_i^2-\tau_2^2\beta_i-\tau_3^2\beta_{i-1}-\dots-\tau_{i-1}^2\beta_3+\sum\limits_{j=3}^{t}\beta_j\tau_i^j),\quad i\geq 4.
\end{array}$$
we obtain $(\varphi-\psi)(e_2,x)=\chi(e_2,x)=0$.

Finally, $Z(e_i,e_1,x)=0,\ i\geq 2$ gives $(\varphi-\psi)(e_i,x)=\chi(e_i,x)=0,\quad i\geq3$.
This completes the proof.
\end{proof}
\subsection{Infinite-dimensional Leibniz algebra.}\label{sub2}
In this section we study the extensions of a non-Lie Leibniz algebra with the following table of multiplications. (This algebra has been introduced in \cite{Omirov}).
$$F:\left\{\begin{array}{ll}
[e_i,e_1]=e_{i+1},&i\geq2.
\end{array}\right.$$
The algebra of derivations of $F$ is given by the following proposition, the proof of that is immediate if we apply the derivation rules to the table of multiplications above.
\begin{prop}\label{prop01} The derivations of the algerba $F$ are given as follows:
$$Der(F): \quad \left\{\begin{array}{lll}
d(e_1)=\alpha_1e_1,\\[1mm]
d(e_i)=((i-2)\alpha_1+\beta_2)e_{i}+\sum\limits_{k=3}^t\beta_{k}e_{k+i-2},\ \ i\geq2,\\[1mm]
\end{array}\right.$$
\end{prop}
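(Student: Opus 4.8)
The plan is to compute $\mathrm{Der}(F)$ directly from the defining relations, exactly as the statement suggests, since $F$ is a free-ish filiform-type algebra generated by $e_1$ and $e_2$ (every $e_i$ with $i\geq 3$ is obtained as $e_i=[e_{i-1},e_1]$). First I would observe that a derivation $d$ is completely determined by its values on the two generators $e_1$ and $e_2$, because the Leibniz rule propagates $d$ along the relation $[e_i,e_1]=e_{i+1}$: concretely, $d(e_{i+1})=d([e_i,e_1])=[d(e_i),e_1]+[e_i,d(e_1)]$, so once $d(e_1)$ and $d(e_2)$ are fixed, all higher $d(e_i)$ are forced. Thus the real content is to write $d(e_1)=\sum_{k}\alpha_k e_k$ and $d(e_2)=\sum_{k}\gamma_k e_k$ with undetermined coefficients and then extract the constraints these must satisfy.

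The key constraints come from applying $d$ to products that can be formed in more than one way, or to products whose left factor is not a generator. The crucial relation is $[e_1,e_1]$ and $[e_1,e_i]$: in $F$ we have $[e_1,e_1]=0$ and indeed $[e_1,e_j]=0$ for all $j$ (the only nonzero brackets are $[e_i,e_1]=e_{i+1}$ with $i\geq 2$). Applying the derivation rule to $0=[e_1,e_1]$ gives $0=[d(e_1),e_1]+[e_1,d(e_1)]$; since every bracket $[e_1,\,\cdot\,]$ vanishes, this reduces to $[d(e_1),e_1]=0$. Writing $d(e_1)=\sum_k \alpha_k e_k$ and using $[e_k,e_1]=e_{k+1}$ for $k\geq 2$, the condition $[d(e_1),e_1]=0$ forces $\alpha_k=0$ for all $k\geq 2$, so that $d(e_1)=\alpha_1 e_1$. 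This is the step I expect to carry the most weight, because it is what collapses the $e_1$-image down to a single scalar multiple and thereby pins down the leading coefficient $\alpha_1$ that controls the whole family.

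Next I would compute $d(e_i)$ inductively using $d(e_{i+1})=[d(e_i),e_1]+[e_i,d(e_1)]=[d(e_i),e_1]+\alpha_1[e_i,e_1]=[d(e_i),e_1]+\alpha_1 e_{i+1}$, starting from the free data $d(e_2)=\sum_{k\geq 2}\beta_k e_k$ (any $e_1$-component of $d(e_2)$ will be excluded by a further application of the derivation rule to a relation of the form $[e_2,e_1]$, together with $[e_1,e_1]=0$ type identities, which I would check to eliminate a possible $\beta_1 e_1$ term). Carrying the recursion through, each application of $\mathrm{ad}_{e_1}$ shifts indices up by one and the $\alpha_1 e_{i+1}$ term accumulates a linear-in-$i$ coefficient, yielding precisely
$$d(e_i)=\big((i-2)\alpha_1+\beta_2\big)e_i+\sum_{k=3}^{t}\beta_k e_{k+i-2},\qquad i\geq 2.$$
Finally I would verify the converse: that any map of this form genuinely satisfies the Leibniz derivation rule on all brackets $[e_i,e_1]$, which is a routine substitution confirming that the listed family exhausts $\mathrm{Der}(F)$. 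The main obstacle is purely bookkeeping — tracking the index shifts in the summation $\sum_{k=3}^t \beta_k e_{k+i-2}$ so that the telescoping of the recursion lands exactly on the stated closed form — rather than any conceptual difficulty.
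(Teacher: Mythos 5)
Your proposal is correct and is exactly the computation the paper intends: the paper omits the proof entirely, stating it is ``immediate if we apply the derivation rules to the table of multiplications,'' which is precisely your direct approach (with the genuinely load-bearing step, $0=d([e_1,e_1])=[d(e_1),e_1]$ forcing $d(e_1)=\alpha_1e_1$, correctly identified --- this is where the Leibniz case diverges from the Lie algebra $\mathfrak{m}_0$, where $[e_1,d(e_1)]$ cancels that term). One small repair: the $e_1$-component $\beta_1$ of $d(e_2)$ is not eliminated by the relation $[e_2,e_1]=e_3$ as you suggest, but by applying $d$ to a vanishing product with right factor of index $\geq 2$, e.g.\ $0=d([e_2,e_2])=[d(e_2),e_2]+[e_2,d(e_2)]=\beta_1 e_3$; with that fixed, your induction and the converse verification (which should also run over the zero products $[e_i,e_j]$, $j\geq 2$, where $\beta_1=0$ is exactly what is needed) go through as described.
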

%
%
%
%
%
%
%

Let $R(F)$ stand for a family of residually solvable Leibniz algebras whose maximal by inclusion  pro-nilpotent ideal is $F$ and $Q$ be the subspace complementary to $F$, i.e., $R(F)=F\oplus Q$.

\begin{lem}\label{lem01} The derivations $ad_{x}$ are non-residually nilpotent for any $x\in Q$.
\end{lem}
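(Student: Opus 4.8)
The plan is to argue by contradiction, using the explicit form of $\mathrm{Der}(F)$ in Proposition \ref{prop01} together with the maximality of $F$ as a pro-nilpotent ideal. Since $F$ is a two-sided ideal of $R(F)=F\oplus Q$, for any $x\in Q$ the inner derivation $ad_x$ maps $F$ into $F$, and its restriction $ad_x|_F$ is a derivation of $F$; hence by Proposition \ref{prop01} it has the form $ad_x(e_1)=\alpha_1e_1$ and $ad_x(e_i)=\big((i-2)\alpha_1+\beta_2\big)e_i+\sum_{k=3}^{t}\beta_ke_{i+k-2}$ for $i\ge2$, with scalars $\alpha_1,\beta_2,\beta_3,\dots$ depending on $x$. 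In particular $ad_x$ is upper triangular with respect to the basis $e_1,e_2,\dots$, its diagonal entries being $\alpha_1$ on $e_1$ and $(i-2)\alpha_1+\beta_2$ on $e_i$; the pair $(\alpha_1,\beta_2)$ is precisely the eigenvalue datum of the map that $ad_x$ induces on the two-dimensional quotient $F/F^2\cong\langle\bar e_1,\bar e_2\rangle$. The whole argument is organized around this pair.

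The easy case is $\alpha_1\neq0$. Then $e_1$ is a genuine eigenvector of $ad_x$ with nonzero eigenvalue, so $e_1=ad_x^{\,i}\!\big(\alpha_1^{-i}e_1\big)\in\mathrm{Im}\,ad_x^{\,i}$ for every $i$, whence $e_1\in\bigcap_i\mathrm{Im}\,ad_x^{\,i}$ and the conclusion holds. When $\alpha_1=\beta_2=0$, the operator $ad_x$ kills $e_1$ and strictly raises the index on $F'=\langle e_2,e_3,\dots\rangle$, so it is residually nilpotent; here the plan is to turn this against maximality by showing that $F\oplus\langle x\rangle$ is then a pro-nilpotent ideal strictly larger than $F$. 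Concretely one bounds the lower central series of $F\oplus\langle x\rangle$ using that the inner derivations of the pro-nilpotent algebra $F$ are residually nilpotent and that $ad_x$ raises the index, obtaining $\bigcap_k(F\oplus\langle x\rangle)^k=0$, a contradiction.

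The main obstacle is the remaining subcase $\alpha_1=0$, $\beta_2\neq0$, where $ad_x|_{F'}$ equals $\beta_2\,\mathrm{Id}+N$ with $N$ the strictly index-raising part $e_i\mapsto\sum_{k\ge3}\beta_ke_{i+k-2}$. Such an operator is injective, yet it may itself be residually nilpotent: if $N\neq0$ then every nonzero element of $\mathrm{Im}\,ad_x^{\,i}$ has support of diameter at least $i$, so $\bigcap_i\mathrm{Im}\,ad_x^{\,i}$ can collapse to $0$. Thus the eigenvector trick of the first case is unavailable, and I expect the real difficulty to be showing that the index-raising tail cannot actually occur for $x\in Q$. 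The plan is to use the relations that the Leibniz identity imposes between the left and right actions of $x$ on the generators $e_1,e_2$ of $F$, together with the maximality of $F$, to force $ad_x$ to reduce on $F'$ to the pure homothety $\beta_2\,\mathrm{Id}$, for which $\langle e_2,e_3,\dots\rangle\subseteq\bigcap_i\mathrm{Im}\,ad_x^{\,i}$ and the statement follows at once. Carrying out this reduction — i.e. proving that no genuine index-raising perturbation survives — is the delicate heart of the proof.
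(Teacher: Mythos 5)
Your first two cases are fine and coincide with the paper's argument (the paper handles $\alpha_1\neq 0$ implicitly and devotes its proof to exactly your second case: if $\alpha_1=\beta_2=0$ then $F+\mathbb{C}x$ is pro-nilpotent, contradicting maximality of $F$). The genuine gap is your plan for the subcase $\alpha_1=0$, $\beta_2\neq 0$: the reduction you call the ``delicate heart'' --- forcing the index-raising tail $N$ to vanish via Leibniz identities and maximality --- is not delicate but impossible. The paper's own Theorem \ref{thm02} produces the family $R_3(F,1,\beta)$ with $[e_i,x]=e_i+\sum_{k=3}^{t}\beta_k e_{k+i-2}$ and arbitrary nonzero $\beta$; take for instance $\beta_3=1$ and all other $\beta_k=0$. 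This algebra $R$ is solvable (so residually solvable), one checks $R^k=\langle e_2,e_3,\dots\rangle$ for all $k\geq 2$ (since $e_2+e_3$ and $e_3,e_4,\dots$ lie in every $R^k$), so $R$ itself is not pro-nilpotent and $F$ is indeed its maximal by inclusion pro-nilpotent ideal --- yet $ad_x$ acts on $\langle e_2,e_3,\dots\rangle$ as $\mathrm{Id}+N$ with $N\neq 0$. The same survives in $R_2(F,1,\beta)$ and $R(F,2,\beta)$. So no identity will ever reduce $ad_x$ to the pure homothety $\beta_2\,\mathrm{Id}$, and your proof cannot be completed along the proposed route.

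What is instructive is that your support-diameter observation is correct and cuts deeper than you use it: for the example above, any nonzero $v=ad_x^m(u)$ retains the lowest-index coefficient of $u$ while its top index grows by $m$, so $v$ has support of width at least $m$; since elements are finite linear combinations, $\bigcap_{m}\mathrm{Im}\,ad_x^m=0$. Hence, read against the paper's verbatim definition of a residually nilpotent linear map, the Lemma would actually fail for $R_3(F,1,\beta)$ with $\beta\neq 0$. The paper's proof avoids your third case entirely by simply asserting that $\bigcap_k \mathrm{Im}\,ad_x^k=0$ forces $\alpha_1=\beta_2=0$ (citing Proposition \ref{prop01}); that implication is valid only if residual nilpotency of $ad_x$ is tested on the finite-dimensional quotients $F/F^k$ (equivalently, one takes closures of the images in the inverse-limit topology), where the strictly index-raising tail $N$ is invisible: the induced operator on $F/F^k$ is triangular with diagonal entries $\alpha_1,\ \beta_2,\ \alpha_1+\beta_2,\dots$, and is nilpotent iff $\alpha_1=\beta_2=0$. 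With that reading the whole proof collapses to your first two cases and the subcase you singled out disappears; with the literal reading, your analysis of $\beta_2\,\mathrm{Id}+N$ is a counterexample rather than an obstacle to be overcome. Either way, the step your proposal hinges on --- proving $N=0$ --- is refuted by the paper's own classification.
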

\begin{proof} Let us assume the contrary that $\bigcap\limits_{k=1}^{\infty}Im\empty\ ad_{x}^{k}=0$ for some $x\in Q$. Set $R(F)=F+\mathbb{C}x$. Since $ad_{{x}|F}=d$ for some $d\in Der(F)$ the condition $\bigcap\limits_{k=1}^{\infty}Im\empty\ ad_{x}^{k}=0$ implies $\alpha_{1}=\beta_{2}=0$ (see Proposition \ref{prop01}). Therefore, $[e_{1},x]=0, \quad [e_{i},x]=\sum\limits_{k=3}^{t}\beta_{k}e_{i+k-2}, \ \ i\geq 2$
and $\bigcap\limits_{i=1}^{\infty}(R(F))^{i}=0$, i.e., $R(F)$ is pro-nilpotent which contradicts to the maximality of $F.$
\end{proof}


The fact that the maximal number of residually nil-independent derivations of $F$ is equal to $2$ implies that the dimension of the subspace $Q$ of $R(F)$ is not greater than $2$.
%
Let consider the case $\dim Q=1.$
\begin{thm}\label{thm02}  The algebra $R(F,1)$ admits a basis $\{x, e_{1}, e_{2}, \dots\}$ such that the table of multiplications of $R(F,1)$ on this basis has one of the following forms:

$\qquad \qquad \qquad \qquad \qquad R_1(F,1,\beta):\left\{\begin{array}{ll}
[e_{i} ,e_{1}]=e_{i+1},\quad i\geq2,\\[1mm]
[e_1,x]=-[x,e_1]=e_1,\\[1mm]
[e_i,x]=(i-2+\beta_2)e_{i}+\sum\limits_{k=3}^t\beta_{k}e_{k+i-2},\ \ i\geq2.
\end{array}\right.$\\
where $\beta=(\beta_2,\beta_3,\dots,\beta_t)\in\mathbb{C}^{t-1} \ \text{for \ some}\  t\in \mathbb{N}$.

$\qquad \qquad \qquad \qquad \qquad R_2(F,1,\beta):\left\{\begin{array}{lll}
[e_{i} ,e_{1}]=e_{i+1},\quad i\geq2,\\[1mm]
[x,e_1]=-e_1+e_2,\\[1mm]
[e_1,x]=e_1,\\[1mm]
[e_i,x]=(i-1)e_{i}+\sum\limits_{k=3}^t\beta_{k}e_{k+i-2},\ \ i\geq2,\\[1mm]
[x,x]=\sum\limits_{k=2}^{t-1}\beta_{k+1}e_k.\\[1mm]
\end{array}\right.$\\
where $\beta=(\beta_3,\beta_4,\dots,\beta_t)\in\mathbb{C}^{t-2} \ \text{for \ some}\  t\in \mathbb{N}$.

$\qquad \qquad \qquad \qquad \qquad R_3(F,1,\beta):\left\{\begin{array}{lll}
[e_{i} ,e_{1}]=e_{i+1},\quad i\geq2,\\[1mm]
[e_i,x]=e_i+\sum\limits_{k=3}^t\beta_{k}e_{k+i-2},\ \ i\geq2,\\[1mm]
\end{array}\right.$\\
where $\beta=(\beta_3,\beta_4,\dots,\beta_t)\in\mathbb{C}^{t-2} \ \text{for \ some}\  t\in \mathbb{N}$.
\end{thm}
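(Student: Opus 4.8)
The plan is to mirror the proof of Theorem \ref{thm1}, the genuinely new ingredient being the determination of the ``left'' products $[x,e_i]$ and the square $[x,x]$, which are forced by the Leibniz identity and have no counterpart in the Lie setting. Writing $R(F,1)=F\oplus\mathbb{C}x$, I would first invoke Lemma \ref{lem01}: right multiplication by $x$ restricts on $F$ to a non-residually-nilpotent derivation, so Proposition \ref{prop01} yields
$$[e_1,x]=\alpha_1 e_1,\qquad [e_i,x]=((i-2)\alpha_1+\beta_2)e_i+\sum_{k=3}^{t}\beta_k e_{k+i-2},\quad i\geq 2,$$
with $(\alpha_1,\beta_2)\neq(0,0)$. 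Since $F$ is an ideal, $[x,e_i]\in F$ for every $i$, so only finitely many new scalars occur in each product.

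Next I would pin down the remaining products purely from the identity $Leib=0$. Applying $Leib(x,e_1,e_2)=0$ kills the components $e_k$, $k\geq 2$, of $[x,e_2]$, and $Leib(e_2,x,e_2)=0$ removes the surviving $e_1$-term, so $[x,e_2]=0$; the recursion read off from $Leib(x,e_i,e_1)=0$ then propagates $[x,e_i]=0$ for all $i\geq 2$. The identity $Leib(e_2,x,e_1)=0$ forces the $e_1$-coefficient of $[x,e_1]$ to equal $-\alpha_1$, so $[x,e_1]=-\alpha_1 e_1+\sum_{k\geq 2}\delta_k e_k$. For the square I would use that $[x,x]$ lies in the right annihilator $\mathrm{Ann}_r(R(F,1))$; combined with the maximality of $F$ (which rules out $\alpha_1=\beta_2=0$), the equations $[e_i,[x,x]]=0$ remove both the $x$-component and the $e_1$-component, leaving $[x,x]=\sum_{k\geq 2}\gamma_k e_k$.

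The crux is the identity $Leib(x,x,e_1)=0$. Expanding it expresses every $\gamma_k$ through the $\delta_j$ and $\beta_j$, and, comparing the lowest ($e_2$) coefficient, produces the single scalar obstruction
$$(\beta_2-\alpha_1)\,\delta_2=0.$$
I expect this to be the main obstacle of the argument: it is the precise mechanism that splits the classification into its three branches, and isolating it cleanly requires careful bookkeeping of the infinite expansions (in particular verifying that no higher coefficient contributes to the $e_2$-slot).

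Finally I would run the case analysis, normalising at each stage by rescaling $x$ together with the $F$-preserving substitutions $x'=x+\sum_{j\geq 2}c_j e_j$ (which fix every $[e_i,x]$ and alter only the $e_k$, $k\geq 3$, of $[x,e_1]$) and the rescalings $e_i\mapsto c\,e_i$, $i\geq 2$, preserving $[e_i,e_1]=e_{i+1}$. If $\alpha_1\neq 0$, scale to $\alpha_1=1$: when $\delta_2=0$ (automatic for $\beta_2\neq 1$) the shifts erase the remaining $\delta_k$ and the obstruction's companion relations force $[x,x]=0$, yielding $R_1(F,1,\beta)$; when $\delta_2\neq 0$ the obstruction forces $\beta_2=1$, a rescaling sets $\delta_2=1$, and $Leib(x,x,e_1)=0$ returns $[x,x]=\sum_{k=2}^{t-1}\beta_{k+1}e_k$, yielding $R_2(F,1,\beta)$. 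If $\alpha_1=0$, maximality gives $\beta_2\neq 0$; after scaling $\beta_2=1$ the obstruction forces $\delta_2=0$ while $\delta_1=-\alpha_1=0$, the shifts clear the rest, and $[x,e_1]=[x,x]=0$, yielding $R_3(F,1,\beta)$. The only point needing care beyond routine computation is to confirm that each normalisation keeps $F$ as the maximal pro-nilpotent ideal and leaves the already-fixed products intact.
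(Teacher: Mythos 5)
Your proposal is correct and follows essentially the same route as the paper's proof: decompose $R(F,1)=F\oplus\mathbb{C}x$, read off $[e_i,x]$ from Lemma \ref{lem01} and Proposition \ref{prop01}, kill $[x,e_i]$ ($i\geq 2$) and the $x$- and $e_1$-components of $[x,x]$ via Leibniz identities, and extract from $Leib(x,x,e_1)=0$ exactly the paper's key relations $\gamma_{1,2}(\beta_2-\alpha_1)=0$ and $\mu_k=\gamma_{1,2}\beta_{k+1}$, which drive the same three-case normalisation. Your minor variations (deriving the $e_1$-coefficient of $[x,e_1]$ from $Leib(e_2,x,e_1)$ and propagating $[x,e_i]=0$ by induction on $Leib(x,e_i,e_1)$, where the paper instead invokes $e_i\in{\rm Ann}_r(R(F,1))$ for $i\geq 3$) are cosmetic and equally valid.
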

\begin{proof} Since, for any $x\in Q$ the derivation $ad_x$ of $F$ is non-nilpotent we use it to write the products $[e_1,x]$ and $[e_i,x]\ \ i \geq 2$ in $F$ as follows
$$\begin{array}{ll}
[e_1,x]=\alpha_1e_1,& [e_i,x]=((i-2)\alpha_1+\beta_2)e_{i}+\sum\limits_{k=3}^t\beta_{k}e_{k+i-2},\ \ i\geq 2.
\end{array}$$
Let
$$\begin{array}{ll}
[x,e_i]=\sum\limits_{k=1}^{t}\gamma_{i,k}e_k,\ \ i\in\{1,2\}, & [x,x]=\sum\limits_{k=1}^{t}\mu_{k}e_k+\mu x.\\[1mm]
\end{array}$$
%

\emph{\bf{Case 1.}} Let $\alpha_1\neq0.$
Taking $x'=\frac{x}{\alpha_1}$ we write $[e_1,x]=e_1.$
Since $[e_i,[x,x]]=0,\ i\in\{1,2\}$, we get $\mu=\mu_1=0.$
And $Leib(x,e_1,e_2)=Leib(e_2,x,e_2)=0$ gives $[x,e_2]=0.$

Note that the base change $x'=x-\sum\limits_{k=2}^{t-1}\gamma_{1,k+1}e_k$ gives $[x',e_1]=\gamma_{1,1}e_1+\gamma_{1,2}e_2.$ Since $e_1\not \in Ann_r(R(F,1))$ and $e_i \in Ann_r(R(F,1)),$ $i \geq 3$ we obtain $\gamma_{1,1}=-1$ and $[x,e_i]=0$ for $i\geq 3.$

By using $Leib(x,x,e_1)=0$, we get
$$\begin{array}{lll}
\gamma_{1,2}(\beta_2-1)=0,\quad \mu_{k}=\gamma_{1,2}\beta_{k+1},\quad 2\le k\le t-1,\quad
\mu_{t}=0.
\end{array}$$

\emph{\bf{Case 1.1.}} If  $\gamma_{1,2}=0$ then $\mu_{k}=0,\ 2\le k\le t$ and this leads to $R_1(F,1,\beta)$. 

\emph{\bf{Case 1.2.}} If $\gamma_{1,2}\neq0$ then $\beta_2=1$. The base change
$e_1'=e_1$ and $e_i'=\gamma_{1,2}e_i,\ \ i\geq2$ leads to the table of multiplications of $R_2(F,1,\beta)$.
%

\emph{\bf{Case 2.}} Let now $\alpha_1=0.$ Then $\beta_2\neq 0$ (otherwise, $F$ would be non-maximal pro-nilpotent ideal 
). Let $x'=\frac{x}{\beta_2}$ to get
$$\begin{array}{lll}
[e_i,x]=e_i+\sum\limits_{k=3}^t\beta_{k}e_{k+i-2},\quad i\geq2,&
[x,e_i]=\sum\limits_{k=1}^{t}\gamma_{i,k}e_k,\quad  i\in\{1,2\},&
[x,x]=\sum\limits_{k=1}^{t}\mu_{k}e_k+\mu x.
\end{array}$$
Since $[e_2,[x,x]]=0$
we obtain $\mu=\mu_1=0$, hence, $[x,x]=\sum\limits_{k=2}^{t}\mu_{k}e_k.$

The base change $x'=x-\sum\limits_{k=2}^{t-1}\gamma_{1,k+1}e_k,$ gives $[x',e_1]=\gamma_{1,1}e_1+\gamma_{1,2}e_2.$
Moreover, $Leib(e_2,x,e_1)=Leib(x,e_1,e_2)=0$ implies
$\gamma_{1,1}=\gamma_{2,k}=0,\  2\le k\le t.$ Hence, $[x,e_2]=\gamma_{2,1}e_1.$

Since  $[a,b]+[b,a]\in Ann_r(R(F,1))$ for any $a,b \in R(F,1)$, we get $e_i\in R(F,1),\ i\geq3.$ This means that $[x,e_i]=0,\ i\geq3.$
Therefore, from
$$[x,[e_1,x]]-[[x,e_1],x]+[[x,x],e_1]=0$$
we get $$0-\gamma_{1,2}(e_2+\sum\limits_{k=3}^{t}\beta_ke_{k})+
\sum\limits_{k=2}^{t}\mu_{k}e_{k+1}=0$$
hence
 $$\gamma_{1,2}=0, \mu_{k}=0,\  2\le k\le t\ \mbox{and thus}\ [x,x]=0.$$
From the identity $Leib(x,e_i,x)=0,\ i\in\{1,2\}$ we obtain $[x,x]=[x,e_2]=0.$
We are left with $[e_i,x]=e_i+\sum\limits_{k=3}^t\beta_{k}e_{k+i-2},\ \ i\geq2$ and this completes the proof.
\end{proof}

The following proposition shows that two of the family of algebras obtained are not complete.
\begin{prop}
The derivations
\begin{itemize}
   \item {$d(e_i)=e_i,\ i\geq2$}\\
and
   \item {$d(e_i)=e_{i+2},\  i\geq2,$}
 \end{itemize}
 are outer derivations of the algebras $R_1(F,1,\beta)$ and $R_2(F,1,\beta)$,  respectively.
\end{prop}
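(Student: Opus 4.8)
The plan is to establish, for each of the two maps, the two properties that together say it is an outer derivation: that it is a genuine derivation, and that it fails to be inner. Recall that by definition a Leibniz algebra is complete precisely when it has trivial center and every derivation is inner, so exhibiting a single outer derivation immediately shows the algebra is not complete. I would first fix the values of each map on the remaining generators by setting $d(e_1)=0$ and $d(x)=0$ (for $R_1(F,1,\beta)$ this is consistent with $d|_F$ being the derivation of $F$ from Proposition \ref{prop01} with $\alpha_1=0$, $\beta_2=1$), and then verify the derivation identity $d([a,b])=[d(a),b]+[a,d(b)]$ on each defining product of the corresponding table. For $R_1(F,1,\beta)$ with $d(e_i)=e_i$ this is routine: on $[e_i,e_1]=e_{i+1}$ both sides give $e_{i+1}$; on $[e_1,x]=e_1$ and $[x,e_1]=-e_1$ both sides vanish because $d(e_1)=d(x)=0$; and on $[e_i,x]=(i-2+\beta_2)e_i+\sum_{k\ge3}\beta_k e_{k+i-2}$ the map $d$ acts as the identity on every basis vector that occurs, so both sides agree.

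For the non-inner step I would argue directly rather than through the image-in-$[L,L]$ argument used in Proposition \ref{lem1}, since in the Leibniz setting $e_2$ may already lie in $[R_1(F,1,\beta),R_1(F,1,\beta)]$, precisely when $\beta_2\neq0$, because $[e_2,x]$ then contains an $e_2$ term. Instead, suppose $d=ad_a$ with $a=\sum_j\lambda_j e_j+\mu x$. Evaluating $ad_a(e_1)=[a,e_1]=-\mu e_1+\sum_{j\ge2}\lambda_j e_{j+1}$ and comparing with $d(e_1)=0$ forces $\mu=0$ and $\lambda_j=0$ for $j\ge2$, so $a=\lambda_1 e_1$; then $ad_a(x)=\lambda_1[e_1,x]=\lambda_1 e_1=d(x)=0$ forces $\lambda_1=0$, whence $a=0$ and $ad_a=0\neq d$. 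This contradiction shows $d$ is outer. The same template handles the shift map $d(e_i)=e_{i+2}$: solve $ad_a=d$ on the low-index generators to pin down $a$, and then reach a contradiction with a non-zero value $d(e_i)=e_{i+2}$.

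The step I expect to be the genuine obstacle is the derivation check for the shift map $e_i\mapsto e_{i+2}$ on the mixed products with $x$. Here the Leibniz-specific brackets $[x,e_1]$ and $[x,x]$, together with the weight of $x$ on $e_i$, all enter, and matching $d([e_i,x])=[d(e_i),x]+[e_i,d(x)]$ forces a compatibility condition: the coefficient of $e_i$ in $[e_i,x]$ must be independent of $i$, since $d$ sends $e_i$ to $e_{i+2}$ and must reproduce the same weight two steps further along. This singles out the family whose table reads $[e_i,x]=e_i+\sum_{k\ge3}\beta_k e_{k+i-2}$, of constant weight $1$, as the one on which $e_i\mapsto e_{i+2}$ is a derivation, and it is on this table that I would carry out the verification; for an $i$-dependent weight such as $(i-1)$ no choice of $d(e_1),d(x)$ can satisfy the identity, so this consistency is the crux and I would double-check the pairing of the shift derivation to its algebra before proceeding. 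Once the compatibility is secured, the remaining bracket checks and the non-inner argument above are mechanical.
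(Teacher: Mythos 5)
Your proposal is correct and considerably more careful than the paper's own treatment, which disposes of this proposition in one sentence (``straightforward if one uses the tables of multiplications along with the derivation rule''). Two of your observations go beyond what the paper records, and both check out. First, replacing the image argument of Proposition \ref{lem1} by a direct comparison on generators is not merely stylistic but necessary: in $R_1(F,1,\beta)$ one has $[e_2,x]=\beta_2 e_2+\sum_{k\ge 3}\beta_k e_k$, so for $\beta_2\neq 0$ the vector $e_2$ \emph{does} lie in the derived subalgebra and the Lie-case argument ``$d(e_2)\notin [L,L]$'' breaks down; your computation is valid for all $\beta$ (from $ad_a(e_1)=-\mu e_1+\sum_{j\ge 2}\lambda_j e_{j+1}=0$ one gets $\mu=0$, $\lambda_j=0$ for $j\ge 2$, and then $ad_a(x)=\lambda_1 e_1=0$ kills $\lambda_1$, so $ad_a=0\neq d$). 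Second, your suspicion about the pairing of the shift map is justified: the statement contains a misprint, and $d(e_i)=e_{i+2}$ is an outer derivation of $R_3(F,1,\beta)$, not of $R_2(F,1,\beta)$. Your compatibility analysis is exactly the point: on $R_2(F,1,\beta)$, where $[e_i,x]=(i-1)e_i+\sum_{k\ge 3}\beta_k e_{k+i-2}$, the identity $d([e_i,x])=[d(e_i),x]+[e_i,d(x)]$ would force $[e_i,d(x)]=-2e_{i+2}$ for all $i\ge 2$, which no element $d(x)=\sum_j c_j e_j+c\,x$ can produce, whereas on $R_3(F,1,\beta)$ the constant weight $1$ makes the verification go through, and non-innerness there is immediate since $[a,e_2]=0$ for every $a$ while $d(e_2)=e_4\neq 0$. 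The paper's internal evidence confirms your reading: the sentence preceding the proposition says two of the three families are not complete, Theorem \ref{t2} proves $R_2(F,1,\beta)$ \emph{is} complete (so it cannot admit an outer derivation), and the Lie analogue in Proposition \ref{lem1} pairs the shift derivation with $R_3(\mathfrak{m}_0,1,\beta)$.
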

\begin{proof}
The proof of the proposition is straightforward if one uses the tables of multiplications along with the derivation rule.
\end{proof}

\begin{thm}\label{t2}
An arbitary algebra of the family $R_2(F,1,\beta)$ is complete.
\end{thm}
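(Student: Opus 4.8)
The plan is to follow the scheme of the proof of Theorem~\ref{t1}: reduce completeness to the completeness of the finite-dimensional quotients $L/F^{k}$ and then pass to the limit, where throughout I write $L:=R_2(F,1,\beta)$. Recall that completeness means $\mathrm{Center}(L)=0$ and $\mathrm{HL}^1(L,L)=0$, i.e. every derivation is inner, and that (as in Theorem~\ref{t1}) the inner derivation attached to $c$ is the right multiplication $v\mapsto[v,c]$. Triviality of the center is immediate: for $z=\sum_i a_ie_i+\lambda x$ one has $[z,e_1]=\sum_{i\ge2}a_ie_{i+1}+\lambda(-e_1+e_2)$, so $[z,e_1]=0$ forces $\lambda=0$ and $a_i=0$ for $i\ge2$, after which $[a_1e_1,x]=a_1e_1=0$ gives $a_1=0$. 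Since $\{e_1,e_2,x\}$ generate $L$ and a derivation is determined by its values on generators, it suffices to produce, for each $d\in\mathrm{Der}(L)$, an element $c$ with $d(z)=[z,c]$ for $z\in\{e_1,e_2,x\}$.

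The first substantive step is to prove that $d(F)\subseteq F$, hence $d(F^{k})\subseteq F^{k}$, for every derivation $d$. This is where the argument diverges from the Lie case of Theorem~\ref{t1}, in which one used $[Q,Q]=0$: here $[x,x]=\sum_k\beta_{k+1}e_k$ is nonzero. The point is that it still lies in $F$, and the triangular form of $[e_1,x]=e_1$, $[e_i,x]=(i-1)e_i+\cdots$ shows that $[\cdot,x]$ maps $F$ onto $F$, so $[F,x]=F$. Writing $w\in F$ as $w=[f,x]$ and decomposing $d(f)=d_F(f)+\phi(f)x$ with $d_F(f)\in F$, I would compute
\[
d(w)=[d(f),x]+[f,d(x)]=[d_F(f),x]+\phi(f)[x,x]+[f,d(x)],
\]
and each term lies in $F$ because $[F,x]=F$, $[x,x]\in F$ and $F$ is an ideal; thus $d(F)\subseteq F$. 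Consequently $d$ descends to a derivation $\bar d$ of $L/F^{k}$ via $\bar d(\bar v)=\overline{d(v)}$.

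Next I would invoke the finite-dimensional theory. For $k$ exceeding the supports of $\beta$ and of $d$ on the generators, $L/F^{k}$ is a finite-dimensional solvable Leibniz algebra whose nilradical is $F/F^{k}$ (codimension $1$, two generators); by the uniqueness of the complete and rigid algebra \eqref{O'} taken from \cite{O'ktam} it is isomorphic to that algebra and hence complete, so $\bar d(\bar v)=[\bar v,\bar c_k]$ for some $\bar c_k=\sum_{i=1}^{k}\alpha_{k,i}e_i+\lambda_k x+F^{k}$. A simplification peculiar to this algebra is that $[v,e_j]=0$ for every $v$ and every $j\ge2$, so the inner derivation depends on $c_k$ only through its $e_1$- and $x$-components; I may therefore take $c_k=\alpha_{k,1}e_1+\lambda_k x$. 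Comparing $\overline{d(e_1)}=[\bar e_1,\bar c_k]=\lambda_k e_1$ with the $e_1$-component of $\overline{d(x)}=[\bar x,\bar c_k]=-\alpha_{k,1}e_1+\cdots$ pins down $\lambda_k$ and $\alpha_{k,1}$ in terms of finitely many structure constants of $d$, independently of $k$; hence $c_k$ stabilizes to a single $c$. Setting $W_k=\mathrm{Span}\{x,e_1,\dots,e_k\}$ and using $\bigcup_k W_k=L$, I conclude $d(z)=[z,c]$ for all $z$, so $d$ is inner.

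The step I expect to be the main obstacle is justifying that the quotients $L/F^{k}$ are complete, since this is what licenses the whole reduction. Concretely, one must verify that $L/F^{k}$ falls under the uniqueness statement of \cite{O'ktam}: that its maximal nilpotent ideal is exactly $F/F^{k}$ (so that $\dim Q=1$ is strictly below the number of generators), carrying the Leibniz-specific term $[x,x]$ and the parameters $\beta_k$ through the identification. Once this is secured, the coefficient matching and the verification that $c_k$ is independent of $k$ are routine.
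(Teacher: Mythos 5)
Your route is the same as the paper's: trivial center by direct computation, passage to the finite-dimensional quotients $R_2(F,1,\beta)/F^k$, identification of these with the complete algebra (\ref{O'}) so that \cite{O'ktam} gives innerness there, and then stabilization of the lifted elements $c_k$ (your observation that $e_j\in\mathrm{Ann}_r(R_2(F,1,\beta))$ for $j\geq 2$, so that $c_k$ may be taken in $\mathrm{Span}\{e_1,x\}$, is exactly what underlies the paper's coefficient matching $a_1=\lambda_k$, $\gamma_1=-\alpha_{k,1}$). However, your justification of the invariance step $d(F)\subseteq F$ rests on a false claim: $[F,x]=F$ fails whenever some $\beta_k\neq 0$. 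The operator $e_1\mapsto e_1$, $e_i\mapsto (i-1)e_i+\sum_{k=3}^{t}\beta_k e_{i+k-2}$ is triangular with nonzero diagonal and hence injective, but it is \emph{not} surjective on $F$: if $f=\sum_{i=1}^{m}f_ie_i$ with $f_m\neq 0$ and $m\geq 2$, then the coefficient of $e_{m+t-2}$ in $[f,x]$ is $f_m\beta_t\neq 0$, so for instance $e_2$ has no preimage in $F$ --- the formal preimage is an infinite series, which is not an element of the algebra, since elements are finite linear combinations of basis vectors. Thus your decomposition $w=[f,x]$ of an arbitrary $w\in F$ is not available, and the step as written breaks down.

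The repair is small and keeps your argument intact: you only need $[F,x]\subseteq F$ together with $F=[F,F]+[F,x]$, which holds because $e_j=[e_{j-1},e_1]$ for $j\geq 3$, $e_1=[e_1,x]$, and $e_2=[e_2,x]-\sum_{k=3}^{t}\beta_ke_k$ with the sum lying in $[F,F]$. Then for $w=[f,g]$ with $f\in F$, $g\in R_2(F,1,\beta)$ one gets $d(w)=[d(f),g]+[f,d(g)]\in F$, since $[F,g]\subseteq F$, $[x,g]\in F$ (as $[x,e_1]=-e_1+e_2$, $[x,e_i]=0$ for $i\geq 2$, and $[x,x]\in F$), and $[f,d(g)]\in [F,R_2(F,1,\beta)]\subseteq F$. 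With this substitution your proof coincides with the paper's, which in fact takes $d(F^k)\subseteq F^k$ for granted in this theorem (the analogous verification appears only in the Lie case, Theorem \ref{t1}); and the point you flag as the main obstacle --- verifying that $R_2(F,1,\beta)/F^k$ really is isomorphic to (\ref{O'}) for all $\beta$ --- is asserted in the paper at exactly the level of detail you permit yourself, with a citation to \cite{O'ktam} and no computation.
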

\begin{proof} The fact that the center of $R_2(F,1,\beta)$ is trivial follows immediately from the table of multiplications. We prove that all derivations of $R_2(F,1,\beta)$ are inner. 
Note that $R_2(F,1,\beta)=F\oplus \mathbb{C}x$, where $Q=\mathbb{C}x$, and $\{e_1, e_2, x\}$ are generators of $R_2(F,1,\beta)$. 
Note also that for $k\in \mathbb{N}$ the quotient algebra $R_2(F,1,\beta)/F^k$ is isomorphic to the finite-dimensional Leibniz algebra with the table of multiplications (\ref{O'}). We make use the fact that all derivations of the algebra with the table of multiplications (\ref{O'}) are inner what was proven in \cite{O'ktam}.
We set
$$d(e_1)=\sum\limits_{i=1}^{s}a_{i}e_i, \quad d(e_2)=\sum\limits_{i=1}^{s}b_{i}e_i, \quad
  d(x)=\sum\limits_{i=1}^{s}\gamma_{i}e_i+\gamma_{1,1}x+\gamma_{2,2}y.
$$

and put \quad $\bar{c}_k=\sum\limits_{i=1}^{k}\alpha_{k,i}e_i+\lambda_{k}x+F^k.$ From the equalities
$\overline{d(e_1)}=[\overline{e}_1,\bar{c}_k]$, \ \ $\overline{d(e_2)}=[\overline{e}_2,\bar{c}_k]$ and $\overline{d(x)}=[\overline{x},\bar{c}_k]$
 we get
$$\sum\limits_{i=1}^{s}a_{i}e_i-\lambda_ke_1, \quad \sum\limits_{i=1}^{s}b_{i}e_i-(\lambda_k+\mu_k)e_2-
\lambda_k\sum_{q=3}^{t}\beta_qe_q, \quad \sum\limits_{i=1}^{s}\gamma_{i}e_i+\gamma_{1,1}x+\alpha_{k,1}e_1\in F^k.$$
This implies
\begin{equation}\label{eq3_2}
a_1=\lambda_k, \quad b_2=\lambda_k+\mu_k, \quad \gamma_1=-\alpha_{k,1}.
\end{equation}

From \eqref{eq3_2} we conclude that $c_k=c_{k+1}$ for any $k\geq max\{s,t\}$, then setting $c:=c_k$, for the smallest $k$
obtain $d=ad_{c}.$
\end{proof}
By the following theorem we describe
 the family $R(F,2)$ of residually solvable Leibniz algebras whose maximal by inclusion pro-nilpotent ideal is $F$ and the subspace complementary to $F$ is maximal.

\begin{thm}\label{thm3} The algebra $R(F,2)$ admits a basis $\{x, y, e_{1}, e_{2}, \dots\}$ such that the table of multiplication of $R(F,2)$ on this basis has the following form
$$R(F,2,\beta):\left\{\begin{array}{ll}
[e_{i} ,e_{1}]=e_{i+1},\quad i\geq2,\\[1mm]
[e_1,x]=-[x,e_1]=e_1\\[1mm]
[e_i,x]=(i-2)e_{i}+\sum\limits_{k=2}^t\beta_{k}e_{k+i-2},\ \ i\geq2,\\[1mm]
[e_i,y]=e_{i},\ \  i\geq2,
\end{array}\right.$$
where $\beta=(\beta_2,\beta_3,\dots,\beta_t)\in\mathbb{C}^{t-1} \ \text{for \ some}\  t\in \mathbb{N}.$
\end{thm}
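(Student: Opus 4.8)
The plan is to mirror the proof of Theorem~\ref{thm02}, now exploiting that the derivations $ad_x|_F$ and $ad_y|_F$ are residually nil-independent; this rigidity is what collapses the several families of the rank-one case into the single normal form $R(F,2,\beta)$. First I would write $R(F,2)=F\oplus Q$ with $Q=\mathrm{Span}\{x,y\}$. By Lemma~\ref{lem01} the restrictions $ad_x|_F$ and $ad_y|_F$ are non-residually-nilpotent derivations of $F$, and since $\dim Q$ is maximal they must be residually nil-independent. Using Proposition~\ref{prop01} I would record the general shape
$$[e_1,x]=\alpha_1^x e_1,\qquad [e_i,x]=\big((i-2)\alpha_1^x+\beta_2^x\big)e_i+\sum_{k=3}^{t}\beta_k^x e_{k+i-2},\quad i\ge2,$$
together with the same expressions in $y$, and I would introduce symbols for all the still-undetermined products $[x,e_j]$, $[y,e_j]$, $[x,x]$, $[x,y]$, $[y,x]$, $[y,y]$.

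Next, note that a derivation of $F$ is residually nilpotent exactly when $\alpha_1=\beta_2=0$, so the assignment $d\mapsto(\alpha_1,\beta_2)$ sends the nil-independent pair $ad_x|_F,ad_y|_F$ to a basis of $\mathbb{C}^2$. An invertible change of the basis $\{x,y\}$ of $Q$ then normalizes this data to $(\alpha_1^x,\beta_2^x)=(1,\beta_2)$ and $(\alpha_1^y,\beta_2^y)=(0,1)$; rescaling, the graded base changes in $F$, and the addition of suitable elements of $F$ to $x$ and $y$ (exactly the moves used in Cases~1 and~2 of Theorem~\ref{thm02}) clear the coefficients attached to $y$, giving $[e_1,y]=0$ and $[e_i,y]=e_i$ for $i\ge2$, and bring the products with $x$ to $[e_1,x]=e_1$ and $[e_i,x]=(i-2)e_i+\sum_{k=2}^{t}\beta_k e_{k+i-2}$, leaving $\beta=(\beta_2,\dots,\beta_t)$ as the surviving parameters.

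The substantive step is to determine the left products and the $Q$-internal products, where the genuinely Leibniz (non-antisymmetric) behaviour appears. Since $e_i=[e_{i-1},e_1]=[e_{i-1},e_1]+[e_1,e_{i-1}]$ for $i\ge3$ and $[a,b]+[b,a]\in Ann_r(R(F,2))$, one gets $e_i\in Ann_r(R(F,2))$ for $i\ge3$, hence $[x,e_i]=[y,e_i]=0$ for $i\ge3$. Feeding $[e_1,x]+[x,e_1]\in Ann_r(R(F,2))$ and $[e_1,y]+[y,e_1]\in Ann_r(R(F,2))$ into the annihilator condition forces the $e_1$-coefficient of $[x,e_1]$ to be $-1$ and pins $[y,e_1]$ into $Ann_r(R(F,2))$; the vanishing identities $Leib(x,e_1,e_1)=0$, $Leib(e_2,x,e_1)=0$, $Leib(x,x,e_1)=0$, $Leib(x,y,e_1)=0$ and $Leib(x,e_i,x)=0$, together with their $y$-counterparts, then successively yield $[x,e_1]=-e_1$, $[x,e_2]=[y,e_1]=[y,e_2]=0$ and the vanishing of $[x,x],[x,y],[y,x],[y,y]$, reproducing the table of $R(F,2,\beta)$.

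The hard part will be this last paragraph. In contrast to the Lie case $R(\mathfrak{m}_0,2,\beta)$, the left products are not determined by the right ones, and the expected outcome is strongly asymmetric — for instance $[y,z]=0$ for every $z$ although $[z,y]\ne0$ in general — so each such product must be extracted from the Leibniz identity in the correct order, repeatedly using membership in $Ann_r(R(F,2))$ to cut down the unknowns, all while keeping track of which base-change freedoms have already been spent in the normalization of the previous paragraph.
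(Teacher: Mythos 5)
Your skeleton matches the paper's proof (general expansion of all products via Proposition \ref{prop01}, nil-independence from Lemma \ref{lem01}, annihilator arguments and Leibniz identities to pin down the left products and $[Q,Q]$), but there is one genuine gap in the mechanism you propose for the $y$-column. You claim that ``rescaling, the graded base changes in $F$, and the addition of suitable elements of $F$ to $x$ and $y$ (exactly the moves used in Cases 1 and 2 of Theorem \ref{thm02}) clear the coefficients attached to $y$, giving $[e_i,y]=e_i$.'' Those moves cannot do this: in the codimension-one situation of Theorem \ref{thm02}, Case 2, precisely these transformations are exhausted and the higher-order terms survive as the parameters $\beta$ of $R_3(F,1,\beta)$; the same obstruction applies verbatim to $ad_y|_F$ here. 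The vanishing of the coefficients $\beta_k'$ in $[e_i,y]=e_i+\sum_{k=2}^{t}\beta_k'e_{k+i-2}$ is not a normalization at all but a structural constraint: in the paper it is the \emph{last} step, extracted from the mixed identity $Leib(e_2,x,y)=0$ after one has already established $[x,y]=[y,x]=0$ (in effect, $ad_x$ and $ad_y$ must commute on $F$, and the diagonal part of $ad_x$ does not commute with the shift operators carrying the $\beta_k'$). Your final list of identities ($Leib(x,e_1,e_1)$, $Leib(e_2,x,e_1)$, $Leib(x,x,e_1)$, $Leib(x,y,e_1)$, $Leib(x,e_i,x)$ and their $y$-counterparts) contains no identity of the form $Leib(e_i,x,y)$, so the gap is not repaired downstream; as written, your argument would terminate with the spurious extra parameters $\beta_k'$ still present.

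A smaller inaccuracy in the same paragraph: the identities alone do not yield $[x,y]=0$. They only reduce to $[x,y]=\nu_1e_1$ and $[y,x]=\nu_1'e_1$ (via $Leib(x,y,e_1)=Leib(y,x,e_1)=0$); the paper then spends one more base-change freedom, $y'=y+\nu_1e_1$, to kill $\nu_1$, and only afterwards does $Leib(x,y,x)=0$ force $[y,x]=0$. You do flag that base-change bookkeeping is the delicate point, so this is a matter of making the step explicit rather than a wrong idea --- but note that this substitution also perturbs $[e_i,y]$ by $\nu_1e_{i+1}$, which is another reason the elimination of the $\beta_k'$ must come after it, exactly as in the paper's ordering. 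The rest of the proposal --- the normalization of the pair $(\alpha_1,\beta_2)$ via an invertible change of basis of $Q$ (justified since any residually nilpotent combination $\alpha\,ad_x+\beta\,ad_y=ad_{\alpha x+\beta y}$ would contradict Lemma \ref{lem01}), and the derivation of $[x,e_i]=[y,e_i]=0$ for $i\geq3$ from $[a,b]+[b,a]\in Ann_r(R(F,2))$ --- is sound and consistent with the paper's argument, which reaches $[Q,e_i]=0$ by induction from $Leib(Q,e_1,e_2)=0$ instead.
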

\begin{proof}
Due to Proposition \ref{prop01} we have
$$[e_1,x]=e_1,\quad [e_i,x]=(i-2)e_{i}+\sum\limits_{k=2}^t\beta_{k}e_{k+i-2},\quad
[e_i,y]=e_{i}+\sum\limits_{k=2}^t\beta_{k}'e_{k+i-2}, \quad i\geq2.$$

Let
$$\begin{array}{llll}
[x,e_i]=\sum\limits_{k=1}^{t}\gamma_{i,k}e_k,& &[y,e_i]=\sum\limits_{k=1}^{t}\eta_{i,k}e_k,\ \quad i\in\{1,2\},\\[1mm]
[x,y]=\sum\limits_{k=1}^{t}\nu_{k}e_k+C_{1}x+C_{2}y,& &
[y,x]=\sum\limits_{k=1}^{t}\nu_{k}'e_k+C_{1}'x+C_{2}'y,\\[1mm]
[x,x]=\sum\limits_{k=1}^{t}\mu_{k}e_k+D_{1}x+D_{2}y,& &
[y,y]=\sum\limits_{k=1}^{t}\mu_{k}'e_k+D_{1}'x+D_{2}'y.\\[1mm]
\end{array}$$

%

The identities $Leib(e_i,Q,Q)=0,\quad  i\in\{1,2\}$ imply $[Q,Q]\in F.$
From $Leib(e_2,Q,e_i)=0,\ i\in\{1,2\}$ we obtain $\gamma_{1,1}=-1,\ \gamma_{2,1}=\eta_{1,1}=\eta_{2,1}=0.$
%
%
Use $Leib(Q,e_1,e_2)=0$ to get $[Q,e_2]=0$ which is the base of induction. Hence, applying the induction we obtain $[Q,e_i]=0$ for $i\geq 2$.

%
%

The equalities $[x,[x,x]]=0$ and $[x,[y,y]]=0$ give $[x,x]=\sum\limits_{k=2}^{t}\mu_{k}e_k$ and $[y,y]=\sum\limits_{k=2}^{t}\mu_{k}'e_k$, respectively.
%

The base change
 $$x'=x-\sum\limits_{k=2}^{t-1}\gamma_{1,k+1}e_{k},\ \ \text{and}\ \ y'=y-\sum\limits_{k=2}^{t-1}\eta_{1,k+1}e_{k}.$$
 enables us to write
$$[x,e_1]=-e_1+\gamma_{1,2}e_2,\ \quad [y,e_1]=\eta_{1,2}e_2.$$
From the equalities $Leib(x,e_1,x)=Leib(y,e_1,y)=0$ we derive $\gamma_{1,2}=\eta_{1,2}=\mu_{k}=\mu_{k}'=0, \ 2\le k\le t$ which leads to $[x,e_1]=-e_1$ and $[x,x]=[y,y]=[y,e_1]=0.$
%
%
%
The equalities $Leib(x,y,e_1)=Leib(y,x,e_1)=0$
give $[x,y]=\nu_1e_1$ and $[y,x]=\nu_1'e_1$.
%
Use the base change $y'=y+\nu_1e_1$ to get
$[x,y]=0,\ \ [y,x]=(\nu_1'+\nu_1)e_1.$
The identity $Leib(x,y,x)=0$ gives $[y,x]=0.$

%

Finally, we make use  $Leib(e_2,x,y)=0$ to get $\beta_{k}'=0, \ 2\le k\le t$. Thus we obtain the required table of multiplications of $R(F,2,\beta)$.
%
%
\end{proof}

\begin{thm}\label{t3}
All Leibniz algebras $R_2(F,2,\beta)$ are complete .
\end{thm}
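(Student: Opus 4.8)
The plan is to mimic the proof of Theorem~\ref{t1}, transported to the Leibniz setting along the lines of Theorem~\ref{t2}. Write $R:=R(F,2,\beta)=F\oplus Q$ with $Q=\mathrm{Span}\{x,y\}$ (I read the statement as referring to the family produced in Theorem~\ref{thm3}). First I would note that $\mathrm{Center}(R)=0$ is immediate from the multiplication table: the relations $[e_1,x]=e_1$ and $[e_i,y]=e_i$ for $i\ge2$ already exclude every basis vector from the center, hence every nonzero element. The substance of the theorem is the claim that every $d\in\mathrm{Der}(R)$ is inner; since a derivation is determined by its action on the generators $e_1,e_2,x,y$ of $R$, it suffices to exhibit a single $c\in R$ with $d(z)=ad_c(z)$ for $z\in\{e_1,e_2,x,y\}$.

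The key preliminary step is to show that $d(F^k)\subseteq F^k$ for every $k$, so that $d$ descends to the finite-dimensional quotient $R/F^k$. Here one uses $F=[F,Q]$, which holds because $e_1=[e_1,x]$ and $e_i=[e_i,y]$ for $i\ge2$, together with $[Q,Q]=0$. Decomposing $d=d_F+d_Q$ with $d_F\colon F\to F$ and $d_Q\colon F\to Q$, the derivation rule gives $d(F)=d([F,Q])=[d(F),Q]+[F,d(Q)]$; since $[d_Q(F),Q]\subseteq[Q,Q]=0$ and $[F,\,\cdot\,]\subseteq F$ (as $F$ is an ideal), this forces $d(F)\subseteq F$ and $d_Q(F)=0$. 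Induction on $k$ through $F^{k+1}=[F^k,F]$ then yields $d(F^k)\subseteq F^k$, so $\bar d(\bar v):=\overline{d(v)}$ is a well-defined derivation of $R/F^k$.

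Next I would invoke the finite-dimensional input. For each $k$ the quotient $R/F^k$ is the maximal solvable extension of the finite-dimensional nilpotent Leibniz algebra $F/F^k$ by the two-dimensional complementary space $Q$; such an extension is unique up to isomorphism, and all of its derivations are inner by the corresponding finite-dimensional completeness result (the Leibniz analogue of the fact used in Theorem~\ref{t1}; cf.\ \cite{Casas1,Ladra}). Hence $\bar d=ad_{\bar c_k}$ for some $\bar c_k=\sum_{i=1}^{k}\alpha_{k,i}e_i+\lambda_k x+\mu_k y+F^k$. Writing $d(e_1)=\sum_i a_ie_i$, $d(e_2)=\sum_i b_ie_i$, $d(x)=\sum_i\gamma_ie_i+\gamma_{1,1}x+\gamma_{2,2}y$ and $d(y)=\sum_i\tau_ie_i+\tau_{1,1}x+\tau_{2,2}y$, I would compare the coefficients of the basis vectors in the four identities $\overline{d(e_1)}=[\bar e_1,\bar c_k]$, $\overline{d(e_2)}=[\bar e_2,\bar c_k]$, $\overline{d(x)}=[\bar x,\bar c_k]$ and $\overline{d(y)}=[\bar y,\bar c_k]$, and solve for $\alpha_{k,i},\lambda_k,\mu_k$ in terms of the finitely many fixed parameters $a_i,b_i,\gamma_i,\tau_i$.

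The decisive observation, exactly as in Theorem~\ref{t1}, is that the element $c_k$ so obtained depends only on those fixed parameters and not on $k$, whence $c_k=c_{k+1}$ for all $k\ge\max\{s,t\}$. Setting $c:=c_k$ and $W_k=\mathrm{Span}\{x,y,e_1,\dots,e_k\}$, one gets $d|_{W_k}=ad_c|_{W_k}$, and since $\bigcup_{k}W_k=R$ this yields $d=ad_c$, completing the proof. I expect the main obstacle to be the invariance step: because the products are no longer antisymmetric and $F$ is a genuinely two-sided ideal, one must check carefully that $F=[F,Q]$ and that the $Q$-valued part $d_Q$ really vanishes; pinning down the precise finite-dimensional reference for the codimension-two quotient is the other delicate point, while the coefficient comparison is routine bookkeeping.
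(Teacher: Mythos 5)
Your proposal is correct and takes essentially the same route as the paper's own proof: pass to the finite-dimensional quotients $R(F,2,\beta)/F^k\cong (R(F,2,0))_k$, use that all derivations of these quotients are inner, solve for $\bar c_k$ on the generators $e_1,e_2,x$ (your extra identity for $y$ is harmless), observe that $c_k$ stabilizes for $k\geq\max\{s,t\}$, and conclude $d=ad_c$ on $\bigcup_k W_k=R$. The only differences are cosmetic: you spell out the invariance step $d(F^k)\subseteq F^k$ via $F=[F,Q]$ and $[Q,Q]=0$ (which the paper proves explicitly only in the Lie case, Theorem \ref{t1}, and uses tacitly here), and you point to \cite{Casas1,Ladra} for the finite-dimensional completeness where the paper simply asserts it by direct computation.
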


\begin{proof} The fact that the center trivial is obvious from the table of multiplications.
Now, we prove that all derivations are inner.
Recall that $R(F,2,\beta)=F\oplus Q$, where $\{x,y\}$ is the basis of $Q$ and $\{e_1, e_2, x,y\}$ are generators of $R(F,2,\beta)$. Consider the quotient algebra $(R(F,2,\beta))_k=\overline{F}\oplus Q,$ where $\overline{F}=F/F^k$ for $k\in \mathbb{N}$. It is isomorphic to ($R(F,2,0))_k$.
 By using the derivation rule and the table of multiplications of $(R(F,2,0))_k$ it is not hard to see that all derivation of the algebra $(R(F,2,0))_k$ are inner. Therefore, all derivations of $(R(F,2,0))_k=\overline{F}\oplus Q,\  k\in \mathbb{N}$ also are inner.

Set
$$d(e_1)=\sum\limits_{i=1}^{s}a_{i}e_i, \quad d(e_2)=\sum\limits_{i=1}^{s}b_{i}e_i, \quad
  d(x)=\sum\limits_{i=1}^{s}\gamma_{i}e_i+\gamma_{1,1}x+\gamma_{2,2}y.
$$
If we choose $\bar{c}_k=\sum\limits_{i=1}^{k}\alpha_{k,i}e_i+\lambda_{k}x+\mu_{k}y+F^k$, 
then $\overline{d(e_1)}=[\overline{e}_1,\bar{c}_k], \quad \overline{d(e_2)}=[\overline{e}_2,\bar{c}_k], \quad \overline{d(x)}=[\overline{x},\bar{c}_k].$ Hence,
$$\begin{array}{lll}
\sum\limits_{i=1}^{s}a_{i}e_i-\lambda_ke_1,\quad \sum\limits_{i=1}^{s}b_{i}e_i-\lambda_k\sum\limits_{k=3}^{s}\beta_ke_k-\mu_ke_2,\quad
\sum\limits_{i=1}^{s}\gamma_{i}e_i+\gamma_{1,1}x+\gamma_{2,2}y+\alpha_{k,1}e_1\in F^k,&\quad
\end{array}$$
This implies
\begin{equation}\label{eq1}\begin{array}{ll}
a_1=\lambda_k,\quad b_2=\mu_k,\quad\gamma_1=-\alpha_{k,1}.
\end{array}
\end{equation}
Thus,\quad
$c_k=-\gamma_1e_1+a_1x+b_2y.$ From \eqref{eq1} we conclude that $c_k=c_{k+1}$ for any $k\geq \max\{s,t\}$. Let $l$ be the smallest $k$ satisfying this condition. Then
setting $c:=c_l$ we get $d=ad_{c}$.
\end{proof}%

Now we treat the second cohomology groups of $R(F,2,\beta)$.

\begin{thm}\label{thm4} The second cohomology groups $\text{HL}^2(R(F,2,\beta),R(F,2,\beta))$ of the algebras $R(F,2,\beta)$ are trivial. 
\end{thm}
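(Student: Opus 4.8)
The plan is to mirror the template used for the Lie algebra $R(\mathfrak{m}_0,2,\beta)$ in Theorem~\ref{thm5}, adapting every step to Leibniz cohomology. First I would fix an arbitrary $\varphi\in\mathrm{ZL}^2(R(F,2,\beta),R(F,2,\beta))$ and record its values on all \emph{ordered} pairs of generators. Because $R(F,2,\beta)$ is a genuine (non-Lie) Leibniz algebra there is no antisymmetry to exploit, so the full list
$$\varphi(e_i,e_j),\ \varphi(e_i,x),\ \varphi(x,e_i),\ \varphi(e_i,y),\ \varphi(y,e_i),\ \varphi(x,y),\ \varphi(y,x),\ \varphi(x,x),\ \varphi(y,y)$$
must be carried along, each written as a finite linear combination of $\{x,y,e_1,e_2,\dots\}$ with undetermined coefficients. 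This doubled bookkeeping, together with the self-pairings $\varphi(x,x)$ and $\varphi(y,y)$, is the principal structural difference from Theorem~\ref{thm5}, where $\varphi$ was only needed on two-element subsets.

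Next I would exhibit an explicit $f\in\mathrm{Hom}(R(F,2,\beta),R(F,2,\beta))$ and pass to the cocycle $\chi:=\varphi-\psi$, where $\psi(a,b)=f([a,b])-[f(a),b]-[a,f(b)]$ is the coboundary of $f$ determined by \eqref{eq4}. As in the Lie proof, $f$ is chosen so that after the subtraction almost every component of $\chi$ already vanishes: its values on the generators $e_1,e_2,x,y$ are read off from the coefficients of $\varphi$, and its values on the tail $e_i$, $i\ge2$, are defined by a recursion. Here I would invoke the right-annihilator structure established while proving Theorem~\ref{thm3} — namely that $e_i$ lies in $\mathrm{Ann}_r(R(F,2,\beta))$ for $i\ge2$ and that $[a,b]+[b,a]\in\mathrm{Ann}_r$ for all $a,b$ — to eliminate at once the left-slot components $\varphi(x,e_i),\varphi(y,e_i)$ and the symmetric pieces $\varphi(x,x),\varphi(y,y),\varphi(x,y)+\varphi(y,x)$, which have no counterpart in the Lie setting.

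I would then impose the Leibniz $2$-cocycle identity $Z(a,b,c)=0$ of \eqref{Coc} on a carefully ordered family of generator triples. Since $Z$ is symmetric in no argument, more orderings are relevant than in Theorem~\ref{thm5}: I expect to need $Z(e_1,x,y)$, the families $Z(e_i,e_j,y)$ and $Z(e_i,e_1,y)$ for $i,j\ge2$, then $Z(e_i,x,y)$ and $Z(e_i,e_1,x)$, supplemented by mixed triples such as $Z(x,e_i,y)$ and $Z(x,x,e_1)$ to control the extra Leibniz directions. As in the Lie argument these constraints are triangular: the $y$-triples fix the $c$-type coefficients by recursions of the form $c_k^k=c_2^2+(k-2)c_1^1$, the $x$-triples then pin down the $b$-type coefficients, and the residual freedom is absorbed by choosing the recursive parameters of $f$ so that $\chi(e_i,e_1)=0$ and $\chi(e_i,x)=0$ for every $i$. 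Once all components of $\chi$ are shown to vanish we conclude $\varphi=\psi\in\mathrm{BL}^2$, whence $\mathrm{HL}^2(R(F,2,\beta),R(F,2,\beta))=0$.

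The hard part will be twofold. First, the genuinely larger Leibniz system must be organised so that a single correction homomorphism $f$ simultaneously trivialises all of it; in particular the interaction between the self-terms $\varphi(x,x),\varphi(y,y)$ and the left-multiplication terms must be resolved using the annihilator relations before the triangular recursion can proceed cleanly. Second, as in the limiting steps that closed Theorems~\ref{thm5} and \ref{t3}, I must verify that the recursion defining $f$ on the infinitely many $e_i$ stabilises — that the finitely many parameters surviving after $\max\{s,t\}$ steps really force $\chi\equiv0$ on all of $\bigcup_k W_k = R(F,2,\beta)$, with $W_k=\mathrm{Span}\{x,y,e_1,\dots,e_k\}$ — so that the argument is valid in the infinite-dimensional setting rather than only on each finite-dimensional quotient.
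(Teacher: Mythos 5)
Your proposal follows essentially the same route as the paper's proof of Theorem \ref{thm4}: expand $\varphi\in \mathrm{ZL}^2$ over all ordered pairs of basis vectors (no antisymmetry, self-pairings included), subtract the coboundary $\psi$ of an explicitly chosen $f\in\mathrm{Hom}(R(F,2,\beta),R(F,2,\beta))$, impose $Z=0$ on ordered generator triples (including the mixed ones $Z(x,e_i,y)$, $Z(x,x,e_1)$, $Z(e_i,e_j,y)$, $Z(e_i,x,y)$ that you list), and then spend the still-unconstrained coefficients $\alpha_k^i$ of $f(e_i)$ to achieve $\chi(e_i,e_1)=0$, after which $Z(e_i,e_1,x)=0$ forces $\chi(e_i,x)=0$, exactly as in the paper. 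The one inaccuracy is your claim that the annihilator relations $e_i\in\mathrm{Ann}_r(R(F,2,\beta))$, $i\ge2$, and $[a,b]+[b,a]\in\mathrm{Ann}_r$ eliminate $\varphi(x,e_i)$, $\varphi(y,e_i)$, $\varphi(x,x)$, $\varphi(y,y)$ ``at once'': a $2$-cocycle need not vanish on right-annihilator arguments --- already a coboundary gives $\psi(x,e_i)=-[f(x),e_i]-[x,f(e_i)]=-[x,f(e_i)]$, which is generally nonzero --- but this is harmless, since in the paper (and in your own subsequent step) those components are removed precisely by the cocycle identities $Z(x,e_1,e_i)=0$, $Z(x,e_i,e_1)=0$, $Z(y,e_i,y)=0$, $Z(e_1,x,x)=0$, $Z(e_2,x,x)=0$, $Z(x,x,e_1)=0$, $Z(e_2,y,y)=0$ together with the choice of $f$.
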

\begin{proof} We follow the same strategy as in Theorem \ref{thm5}, i.e., prove that any cocycle is generated by a linear function $f\in \text{Hom}(R(F,2,\beta),R(F,2,\beta))$.

Let $\varphi\in Z^2(R(F,2,\beta),R(F,2,\beta))$. We expand $\varphi$ by the basis $\{x, y, e_1, e_2,...\}$ as follows
$$\varphi(e_i,e_j)=\sum\limits_{k=1}^{p(i,j)}a_k^{i,j}e_k+a_{1,1}^{i,j}x+a_{2,2}^{i,j}y, \ \ \varphi(e_i,x)=\sum\limits_{k=1}^{s(i)}b_k^ie_k+b_{1,1}^{i}x+b_{2,2}^{i}y, \ \ \varphi(x,e_i)=\sum\limits_{k=1}^{s(i)}d_k^ie_k+d_{1,1}^{i}x+d_{2,2}^{i}y,$$
$$\varphi(e_i,y)=\sum\limits_{k=1}^{s(i)}c_k^ie_k+c_{1,1}^{i}x+c_{2,2}^{i}y, \ \
\varphi(y,e_i)=\sum\limits_{k=1}^{s(i)}f_k^ie_k+f_{1,1}^{i}x+f_{2,2}^{i}y,\ \ \varphi(x,y)=\sum\limits_{k=1}^{s}g_k^ie_k+g_{1,1}^{i}x+g_{2,2}^{i}y,$$
$$\varphi(y,x)=\sum\limits_{k=1}^{s}u_k^ie_k+u_{1,1}^{i}x+u_{2,2}^{i}y,\ \ \varphi(x,x)=\sum\limits_{k=1}^{s}v_k^ie_k+v_{1,1}^{i}x+v_{2,2}^{i}y, \ \
\varphi(y,y)=\sum\limits_{k=1}^{s}r_k^ie_k+r_{1,1}^{i}x+r_{2,2}^{i}y,$$
where $i,j \in \mathbb{N}$.

We claim that $\varphi$ is generated by the linear function $f\in \text{Hom}(R(F,2),R(F,2))$ given below
\begin{equation}\label{34}
\begin{array}{lll}
f(e_{1})=-\alpha_1^1e_1-\sum\limits_{k=2}^{p_{1,1}}a_{k+1}^{1,1}e_k+b_{1,1}^1x+b_{2,2}^1y,& & f(e_{2})=d_1^2e_1+\sum\limits_{k=2}^{s_2}\alpha_k^2e_k+\alpha_{1,1}^2x+\alpha_{2,2}^2y,\\[1mm]
f(x)=\tau_1e_1-\sum\limits_{k=2}^{q}g_ke_k-b_1^1x+\tau_{2,2}y,& &
f(y)=-g_1e_1-\sum\limits_{k=2}^{s}r_ke_k-c_1^1x+\gamma_{2,2}y,\\[1mm]
f(e_{i})=d_1^ie_1+\sum\limits_{k=2}^{s_i}\alpha_k^ie_k+a_{1,1}^{i-1,1}x+a_{2,2}^{i-1,1}y,&& i\geq3,
\end{array}
\end{equation}

that is to prove that the cocycle $\chi=\varphi-\psi\in Z^2(R(F,2),R(F,2))$, where $\psi(x,y)=f([x,y])-[f(x),y]-[x,f(y)]$
is trivial. Expend the cocycle $\chi$ by the basis $\{x, y, e_1, e_2,...\}$

$\begin{array}{lll}
\chi(e_1,e_1)=a_1^{1,1}e_1+a_2^{1,1}e_2+a_{1,1}^{1,1}x+a_{2,2}^{1,1}y,\quad&\chi(e_i,y)=\sum\limits_{k=1}^{s(i)}c_k^ie_k+c_{1,1}^{i}x+c_{2,2}^{i}y,&i\geq3,\\[1mm]
\chi(e_1,e_i)=\sum\limits_{k=1}^{p(1,i)}a_k^{1,i}e_k+a_{1,1}^{1,i}x+a_{2,2}^{1,i}y,& \chi(e_i,e_1)=\sum\limits_{k=1}^{p(i,1)}a_k^{i,1}e_k, \quad i\geq2,\\[1mm]
\chi(e_i,e_j)=\sum\limits_{k=1}^{p(i,j)}a_k^{i,j}+a_{1,1}^{i,j}x+a_{2,2}^{i,j}y,& \chi(e_i,x)=\sum\limits_{k=1}^{s_i}b_k^ie_k+b_{1,1}^{i}x+b_{2,2}^{i}y,& i,j\geq2,\\[1mm]
\chi(x,e_i)=\sum\limits_{k=1}^{s_i}d_k^ie_k+d_{1,1}^{i}x+d_{2,2}^{i}y,& \chi(y,e_i)=\sum\limits_{k=1}^{s(i)}f_k^ie_k+f_{1,1}^{i}x+f_{2,2}^{i}y,& i\geq1,\\[1mm]
\chi(e_1,y)=\sum\limits_{k=2}^{s_1}c_k^1e_k+c_{1,1}^{1}x+c_{2,2}^{1}y,&\chi(e_1,x)=\sum\limits_{k=2}^{s_1}b_k^1e_k,\\[1mm]
\chi(y,x)=\sum\limits_{k=1}^{s}u_k^ie_k+u_{1,1}^{i}x+u_{2,2}^{i}y,&\chi(x,y)=g_{1,1}x+g_{2,2}y,\\[1mm]
\chi(x,x)=\sum\limits_{k=1}^{s}v_k^ie_k+v_{1,1}^{i}x+v_{2,2}^{i}y,&\chi(y,y)=r_1e_1+r_{1,1}^{i}x+r_{2,2}^{i}y,\\[1mm]
\chi(e_2,y)=c_1^2e_1+\sum\limits_{k=3}^{s(i)}c_k^ie_k+c_{1,1}^{i}x+c_{2,2}^{i}y.\\[1mm]
\end{array}$

Now, we impose the cocycle identities Z = 0 (see \ref{Coc}) to $\chi$ and derive a set of constraints for the coefficients as follows.

\begin{center}
    \begin{tabular}{llllll}
       \qquad\quad 2-cocyle identity & &\qquad\qquad\qquad Constraints\\
        \hline \hline
\\
        $Z(e_1,e_1,e_1)=0,$ &\quad $\Rightarrow $\quad & $\left\{\begin{array}{ll}
                                                                 a_{1,1}^{1,1}=0,\\[1mm]

                                                                 \end{array}\right.$\\
        $Z(e_2,e_1,e_1)=0, $ &\quad $\Rightarrow $\quad & $\left\{\begin{array}{ll}
                                                                 a_{1}^{1,1}=a_{2,2}^{1,1}=0,\\[1mm]
                                                                 \end{array}\right.$\\
        $Z(e_1,e_1,y)=0,\quad i\geq2, $ &\quad $\Rightarrow $\quad & $\left\{\begin{array}{ll}
                                                                 a_2^{1,1}=0,\ c_k^1=0,\ 2\le k\le s_1,\\[1mm]
                                                                 \end{array}\right.$\\
        $Z(e_1,e_1,x)=0,\quad i\geq2, $ &\quad $\Rightarrow$ \quad &$\left\{\begin{array}{ll}
                                                                  b_k^1=0,\ 2\le k\le s_1,\\[1mm]
                                                                  \end{array}\right.$\\
$Z(e_1,x,e_1)=0,$ &\quad $\Rightarrow $\quad & $\left\{\begin{array}{ll}
                                                                 d_{1,1}^1=0,\\[1mm]

                                                                 \end{array}\right.$\\
$Z(x,e_1,y)=0,$ &\quad $\Rightarrow $\quad & $\left\{\begin{array}{ll}
                                                                 g_{1,1}^{1}=c_{1,1}^1=c_{2,2}^1=0,\\[1mm]
                                                                 d_k^1=0, \ \ 2\le k\le s_1,\\[1mm]
                                                                 \end{array}\right.$\\

$Z(e_1,e_1,e_i)=0,\quad i\geq2,$ &\quad $\Rightarrow $\quad & $\left\{\begin{array}{ll}
                                                                 a_k^{1,i}=0, \ i\geq2, \ 2\le k\le p(1,i),\\[1mm]
                                                                 \end{array}\right.$\\
$Z(e_1,e_i,y)=0,\quad i\geq3,$ &\quad $\Rightarrow $\quad & $\left\{\begin{array}{ll}
                                                                 a_1^{1,i}=-c_{1,1}^i,\ a_{1,1}^{1,i}=a_{2,2}^{1,i}=0, \ i\geq3,\\[1mm]
                                                                 \end{array}\right.$\\

%


%


        $Z(x,e_1,e_i)=0,\quad i\geq2,$ &\quad $\Rightarrow $\quad & $\left\{\begin{array}{ll}
                                                                 d_{1,1}^{i}=d_k^{i}=0,\ 2\le k\le s_i,\ i\geq2,\\[1mm]

                                                                 \end{array}\right.$\\
        $Z(x,e_i,e_1)=0, $ &\quad $\Rightarrow $\quad & $\left\{\begin{array}{ll}
                                                                 a_{1}^{2,1}=c_{1,1}^2=0,\ a_1^{i,1}=c_{1,1}^i,\\[1mm]
                                                                 d_{2,2}^{i}=0,\ i\geq3,\\[1mm]
                                                                 \end{array}\right.$\\
        $Z(y,y,e_i)=0,\quad i\geq1, $ &\quad $\Rightarrow $\quad & $\left\{\begin{array}{ll}
                                                                 r_{1,1}=0,\\[1mm]
                                                                 \end{array}\right.$\\
        $Z(y,e_i,y)=0,\quad i\geq2, $ &\quad $\Rightarrow$ \quad &$\left\{\begin{array}{ll}
                                                                  f_1^i=f_{1,1}^i=f_{2,2}^i=0,\ i\geq2,\\[1mm]
                                                                  \end{array}\right.$\\
        $Z(e_1,x,x)=0,$ &\quad $\Rightarrow $\quad & $\left\{\begin{array}{ll}
                                                                 v_{1,1}=0,\\[1mm]
                                                                 \end{array}\right.$\\
        $Z(e_2,x,x)=0,$ &\quad $\Rightarrow $\quad & $\left\{\begin{array}{ll}
                                                                 v_{1}=v_{2,2}=0,\\[1mm]
                                                                 \end{array}\right.$\\
        $Z(x,x,e_1)=0,$ &\quad $\Rightarrow $\quad & $\left\{\begin{array}{ll}
                                                                 d_1^1=d_{1,1}^1=d_{2,2}=v_k=0,\ 2\le k\le s,\\[1mm]
                                                                 \end{array}\right.$\\
        $Z(e_2,y,y)=0, $ &\quad $\Rightarrow $\quad & $\left\{\begin{array}{ll}
                                                                 r_1=r_{2,2}=0,\\[1mm]
                                                                 \end{array}\right.$\\
        $Z(y,e_1,x)=0,\quad i\geq2,$ &\quad $\Rightarrow $\quad & $\left\{\begin{array}{ll}
                                                                 u_{1,1}=u_k=0,\ 2\le k\le s,\\[1mm]
                                                                 \end{array}\right.$\\
        $Z(e_i,y,e_j)=0,\quad i,j\geq2,$ &\quad $\Rightarrow $\quad & $\left\{\begin{array}{ll}
                                                                 a_1^{i,j}=a_{1,1}^{i,j}=a_{2,2}^{i,j}=0,\ i,j\geq2,\\[1mm]
                                                                 \end{array}\right.$\\

        $Z(e_1,x,y)=0,\quad i\geq2,$ &\quad $\Rightarrow $\quad & $\left\{\begin{array}{ll}
                                                                 g_{1,1}=0,\\[1mm]
                                                                 \end{array}\right.$\\
        $Z(x,e_i,y)=0,\quad i\geq2,$ &\quad $\Rightarrow $\quad & $\left\{\begin{array}{ll}
                                                                c_1^i=d_{2,2}^i=0,\ i\geq2,\\[1mm]
                                                                 \end{array}\right.$\\

        $Z(e_1,e_2,y)=0, $ &\quad $\Rightarrow $\quad & $\left\{\begin{array}{ll}
                                                                 a_{1,1}^2=a_{2,2}^2=c_{1,1}^2=0,\\[1mm]
                                                                 \end{array}\right.$\\
             \end{tabular}
\end{center}

\begin{center}
    \begin{tabular}{llllll}
       \qquad\quad 2-cocyle identity & &\qquad\qquad\qquad Constraints\\
        \hline \hline
\\
        $Z(e_2,e_1,y)=0, $ &\quad $\Rightarrow$ \quad &$\left\{\begin{array}{ll}
                                                                  c_2^{3}=0,\ c_k^i=c_{k-1}^2, 3\le k\le s_2+1,\\[1mm]
                                                                  \end{array}\right.$\\
     $Z(e_i,e_j,y)=0,\quad i\geq2,$ &\quad $\Rightarrow $\quad & $\left\{\begin{array}{ll}
                                                                 a_k^{i,j}=0, \ 2\le k\le p_{i,j}, \ i,j\geq2,\\[1mm]
                                                                 \end{array}\right.$\\
        $Z(e_i,x,y)=0,\ i\geq2,$ &\quad $\Rightarrow $\quad & $\left\{\begin{array}{ll}
                                                                c_k^2=0,\ 3\le k\le s_2,\\[1mm]
                                                                b_1^i=b_{1,1}^i=b_{2,2}^i=0,\ i\geq2,\\[1mm]
                                                                 \end{array}\right.$\\
        $Z(e_2,y,x)=0, $ &\quad $\Rightarrow $\quad & $\left\{\begin{array}{ll}
                                                                 u_1=u_{2,2}=0,\\[1mm]
                                                                 \end{array}\right.$\\

             \end{tabular}
\end{center}

As a result we get
$$\chi(e_i,e_1)=\sum\limits_{k=2}^{p_{i,1}}a_k^{i,1}e_{k},\quad \chi(e_i,x)=\sum\limits_{k=2}^{s_i}b_k^ie_{k},\ i\geq2.$$

Observe that we have not imposed any constraints for the components {{$\alpha_{k}^i,\ 2\leq i,\ 2\leq k\leq s_i$}}  of $f(e_i)$ in (\ref{34}) yet. Now choosing the coefficients $\alpha_{k}^i,\ 2\leq i,\ 2\leq k\leq s_i$ in (\ref{34}) we attain $\chi(e_i,e_1)=0$ for $i\geq 2$. Since, $\chi(e_i,x)$ are expressed via $\chi(e_i,e_1)$, from
 $Z(e_i,e_1,x)=0,\ i\geq 2$, we conclude that $\chi(e_i,x)=0,\ i\geq2$.
This completes the proof.
%
%
\end{proof}

\textbf{Acknowledgements}. The authors are grateful to the referee for valuable
comments and remarks.

\end{document}